\documentclass[a4paper,12pt]{article}
\usepackage{cite,amsmath,amssymb,amsthm}
\usepackage{circuitikz}
\usetikzlibrary{shapes.geometric}
\usepackage{caption}
\usepackage{subcaption}
\usepackage{hyperref}
\usepackage{float}
\usepackage[utf8]{inputenc}
\newtheorem{theorem}{Theorem}
\newtheorem{proposition}[theorem]{Proposition}

\newtheorem{corollary}[theorem]{Corollary}

\newtheorem{definition}[theorem]{Definition}
\newtheorem{problem}[theorem]{Problem}

\newtheorem{remark}[theorem]{Remark}

\tikzset{
  vertex/.style={circle, draw, minimum size=1.6mm, inner sep=0pt, fill=black, text=white},
  edge/.style={thick, black},
  removed/.style={thick, dashed, red}
}
\begin{document}
\tikzset{My Style/.style={draw, circle, fill=black,scale=0.3}}

\title{Šolt\'{e}s problem for the Kirchhoff index of a graph}

\author
{Kurt Klement Gottwald\thanks{Chemnitz University of Technology,
Faculty of Mathematics, Chemnitz, Germany. E-Mail: \texttt{kurt-klement.gottwald@mathematik.tu-chemnitz.de}},
\quad Snježana Majstorović Ergotić\thanks{Josip Juraj Strossmayer University of Osijek, School of Applied Mathematics and Informatics, Osijek, Croatia. E-Mail:
\texttt{smajstor@mathos.hr}}\\
\quad Tomislav Došlić\thanks{ University of Zagreb,  Faculty of Civil Engineering, Zagreb, Croatia.  E-Mail:
\texttt{tomislav.doslic@grad.unizg.hr}}}

\maketitle

\begin{abstract}
The Kirchhoff index  $Kf(G)$ of a connected graph $G$ is defined as
the sum of resistance distances between all pairs of vertices in $G$. We say that $v\in V(G)$ is a \textit{good vertex} if the Kirchhoff index remains unchanged when $v$ is removed, i.e. $Kf(G)=Kf(G-v)$.
In 1991, Šolt\'{e}s studied the Wiener index of a graph and posed the problem of identifying graphs for which the removal of an arbitrary vertex preserves the Wiener index.  In this paper, we explore a similar concept: identifying \textit{Kirchhoff Šolt\'{e}s graphs}, i.e. graphs in which all vertices are good vertices. We show that the cycle $C_5$ is a Kirchhoff Šolt\'{e}s graph. Due to the challenge of finding more examples of such graphs, we shift our focus to several relaxed versions of the Kirchhoff Šolt\'{e}s problem, where the primary objective is to identify graphs containing at least one good vertex.
One of them is the \textit{$\beta$-Kirchhoff Šolt\'{e}s problem}, which seeks to find an infinite family of graphs in which the proportion of good vertices is at least $\beta$, with $\beta \in (0,1]$ being a specified rational number. Another one involves constructing infinite families of graphs where the proportion of good vertices increases and asymptotically approaches a given real number $\gamma\in (0,1]$  as the order of the graph grows. 
We demonstrate that both relaxed versions have infinitely many solutions. In particular, we prove the existence of infinitely many graphs for which the proportion $\beta$ of good vertices,  $1/7\leq \beta<1/5$ tends to a certain irrational number.  Furthermore, we prove the existence of infinitely many graphs with half good vertices, and for each $s\in\mathbb{N}$, we construct an infinite family of graphs whose proportion of good vertices tends to $\frac{s+1}{2s+1}$.  These findings could be pivotal in addressing the original problem of determining whether there are additional solutions beyond $C_5$. 
\end{abstract}

\noindent\textbf{Keywords:}
Kirchhoff Šolt\'{e}s problem, Kirchhoff index, resistance distance, resistance transmission, Laplacian matrix, good vertex


\section{Introduction}

Effective resistance is a key concept in electric circuit theory that has been extensively explored by prominent figures in physics and engineering such as Kirchhoff \cite{kirch}, Maxwell \cite{max}, Seshu \& Reed \cite{reed}, and Chan \cite{chan}. 
While the graph-theoretic study of electric circuits began with Kirchhoff's analysis over 170 years ago, the introduction of effective resistance as a novel graphical distance occurred in 1993 by Klein \& Randić \cite{klein1}. 
Prior to that, the "shortest-path" distance was the primary type of graphical distance widely recognized and extensively studied. Thus, the introduction of effective resistance as a new form of graphical distance generated significant interest. Scientists eventually recognized resistance distance as a fundamental concept in graph theory, leading to explorations of its mathematical properties and applications in fields such as chemistry.\\
\indent To explain the notion of resistance distance, 
we associate a graph $G$ with a network $N(G)$ of unit resistors (resistor with resistance $1$). In this network, each edge of $G$ corresponds to a unit resistor. See Figure \ref{fig1} for an illustration.

\begin{center}
\begin{figure}[!htb]
    \centering
\begin{tikzpicture}[scale=0.95]
\draw[fill=black] (1.2,0) circle (0.8mm);
\draw[fill=black] (-0.3,-1.18) circle (0.8mm);
\draw[fill=black] (0.3,-2.98) circle (0.8mm);
\draw[fill=black] (2.1,-2.98) circle (0.8mm);
\draw[fill=black] (2.7,-1.18) circle (0.8mm);
\node[anchor=south] at (1.2,0) {1};
\node[anchor=east] at (-0.3,-1.18) {2};
\node[anchor= north east] at (0.3,-2.98)  {3};
\node[anchor= north west] at (2.1,-2.98) {4};
\node[anchor= west] at  (2.7,-1.18) {5};
\draw (1.2,0)--(-0.3,-1.18)--(0.3,-2.98)--(2.1,-2.98)--(2.7,-1.18)--(1.2,0);
\node[text width=6cm, anchor=north, right] at (0.9,-3.7)
    {$C_5$};
\ctikzset{bipoles/length=0.9cm, nodes width=0.08}
\draw (7.2,0) node [anchor=south] {1} to [R,*-*](5.7,-1.18) node [anchor=east] {2} to [R,*-*] (6.3,-2.98) node [anchor= north east] {3} to [R,*-*] (8.1,-2.98) node [anchor= north west] {4} to [R,*-*] (8.7,-1.18) node [anchor= west] {5} to [R,*-*] (7.2,0) node [anchor= south] {1};
\node[text width=6cm, anchor=north, right] at (6.5,-3.7)
    {$N(C_5)$};
\end{tikzpicture}
\caption{The cycle $C_5$ and the associated network $N(C_5)$ of unit resistors.}
\label{fig1}
\end{figure}
\end{center}

\begin{center}
\begin{figure}[!htb]
 \centering
\begin{tikzpicture}[scale=0.8]
\ctikzset{bipoles/length=0.9cm, nodes width=0.08}
\ctikzset{label/align = smart}
\draw (1.2,0) node [anchor=south] {1} to [R,*-*]
 (-0.3,-1.18) node [anchor=east] {2} to [R,*-*] 
(0.3,-2.98) node [anchor= north east] {3} to [R,*-*]
(2.1,-2.98) node [anchor= north west] {4} to [R,*-*]
(2.7,-1.18)node [anchor= west] {5} to [R,*-*] 
(1.2,0)  node [anchor= south] {1};
\draw (1.2,0)--(1.9, 0.8) to[battery1] (3.4,-0.35)--(2.7,-1.18);
\ctikzset{bipoles/length=0.9cm, nodes width=0.08}
\draw (4.2,0) node [anchor=south] {1} to [R, l_=$R$,*-*]
(5.7,-1.18)node [anchor= west] {5};
\draw (4.2,0)--(4.9, 0.8) to[battery1] (6.4,-0.35)--(5.7,-1.18);
\node[text width=6cm, anchor=north, right] at (3.3,-3.8)
    {a)};
\ctikzset{bipoles/length=0.9cm, nodes width=0.08}
\draw (10.2,0) node [anchor=south] {1} to [R,*-*](8.7,-1.18) node [anchor=east] {2} to [R,*-*] (9.3,-2.98) node [anchor= north east] {3} to [R,*-*] (11.1,-2.98) node [anchor= north west] {4} to [R,*-*] (11.7,-1.18) node [anchor= west] {5} to [R,*-*] (10.2,0) node [anchor= south] {1};
\draw (8.7,-1.18)--(8.7, 1) to[battery1] (11.7,1)--(11.7,-1.18);
\node[text width=6cm, anchor=north, right] at (12.1,-3.8)
    {b)};
   \ctikzset{bipoles/length=0.9cm, nodes width=0.08}
\draw (12.7,-1.18) node [anchor= north east] {2} to [R, l=$R$,*-*]
(15.7,-1.18)node [anchor=north west] {5};
\draw (12.7,-1.18)--(12.7, 1) to[battery1] (15.7,1)--(15.7,-1.18);
\end{tikzpicture}
\caption{An electrical circuit obtained from $N(C_5)$ with  effective resistance $R$ between nodes a) $1$ and $5$ and b) $2$ and $5$.}
\label{fig2}
\end{figure}
\end{center}
If a source of electromotive force is connected to two nodes in the network, current will flow into and out of the network. According to Ohm’s law, if the potential difference between two nodes (vertices) $a$ and $b$ in $N(G)$ is $V$ and the current flowing into one node and out of the other is $I$, then $V = IR$, where $R$ is the effective resistance between the two nodes. See the example on Figure \ref{fig2}. 
Ohm's law provides straightforward formulas for calculating the effective resistance of resistors connected in series or in parallel.
If $k$ resistors with resistances of $r_1, r_2,\ldots, r_k$ ohms are connected in series, their effective resistance is
\begin{equation}\label{series}
\Omega(a, b) = r_1 + r_2 + \cdots + r_k,
\end{equation} 
while if they are in parallel, their effective resistance $\Omega(a, b)$ satisfies
\begin{equation}\label{paral}
\frac{1}{\Omega(a, b)} = \frac{1}{r_1} + \frac{1}{r_2} + \cdots + \frac{1}{r_k}.
\end{equation}
By using formulas (\ref{series}) and (\ref{paral}) on the network $N(C_5)$ with a battery linking vertices $1$ and $5$ (see Figure \ref{fig2}a) and $2$ and $5$ (see Figure \ref{fig2}b), we obtain the respective resistance distances
\begin{align}
      R=\Omega(1,5)&=1/\left(1+\frac{1}{4}\right)=\frac{4}{5},\\
      R=\Omega(2,5)&=1/\left(\frac{1}{2}+\frac{1}{3}\right)=\frac{6}{5}. 
\end{align}

\indent Throughout this paper we consider a graph $G$ to be finite, simple, undirected, and connected. The vertex set of $G$ is denoted by $V(G)$, and the edge set by  $E(G)$. The number of vertices in $G$ is usually denoted by $n(G)$ or simply $n$, while the number of edges is denoted by $m(G)$ or $m$. The distance $d_G(u, v)$ (or simply $d(u,v)$) between vertices $u$ and $v$ (which we previously referred to as the "shortest-path" distance) is defined
as the number of edges in the shortest path connecting them in $G$.

\begin{definition} The resistance distance between vertices $a$ and $b$ in  a graph $G$, denoted by $\Omega(a,b)$ is the effective
resistance between $a$ and $b$ in the network of unit resistors $N(G)$.
\end{definition}
We use the notation $\Omega_G (a,b)$  to emphasize the graph $G$ in which we consider the resistance distance between vertices $a$ and $b$.
Klein \& Randić \cite{klein1}  proved that  the function $\Omega: V(G) \times V(G) \to \mathbb{R}$  is a metric on the set $V(G)$.
Analogous to the definition of the Wiener index $W(G)$, which is the sum of the shortest-path distances between all unordered pairs of vertices in  $G$ \cite{wiener}, 
the Kirchhoff index $Kf(G)$ is defined as the sum of the resistance distances between all unordered pairs of vertices in $G$, i.e.
\begin{equation}\label{kirch}
Kf(G)=\sum_{\{u,v\}\subseteq V(G)}\Omega(u, v) = \frac{1}{2}\sum_{v\in V(G)}Rt(v),
\end{equation}
where $Rt(v)=\displaystyle{\sum_{u\in V(G)}\Omega (u,v)}$ is the resistance transmission of the vertex $v$.  Klein \& Randić proved that $Kf(G) \leq W(G)$ with equality holding if and only if $G$ is a tree.
Since the Wiener index, the oldest and most studied distance-based topological index, has been deeply investigated in the context of trees, the Kirchhoff index is primarily of interest for cyclic graphs.\\
\indent In recent years, the Kirchhoff index has been the focus of intense scrutiny from various perspectives. 
Research on the Kirchhoff index typically addresses three main areas: determining exact values for the Kirchhoff index in graphs endowed with some form of symmetry or special property, establishing general bounds for the Kirchhoff index in terms of graph invariants, and identifying extremal graphs within specific families. Below, we highlight several significant contributions related to resistance distance and the Kirchhoff index.
Fowler \cite{fow} calculated resistance distances in fullerene graphs, Zhang and Jang \cite{zhang} derived closed-form formulae for the Kirchhoff index and resistance distances of circulant graphs,  Bapat \& Gupta \cite{bapat2} presented 
formulas involving Fibonacci numbers for resistance distances in wheels and fans, while Gervacio \cite{gerva} provided explicit expression for resistance distances between any pair of vertices in complete multipartite graphs. The concept of resistance distance was extended to directed graphs using random walks by researchers in \cite{bianchi, zhu}. Majorization theory was used to obtain upper and lower bounds for the Kirchhoff index in arbitrary graphs \cite{bianchi2}, and extremal unicyclic graphs with respect to the Kirchhoff index were identified \cite{unicyc}. Additionally, the behavior of resistance distance under various unary and binary graph operations was examined \cite{yang, zhang2}.\\
\indent The present paper is motivated by the {\v S}olt{\'e}s problem   on the Wiener index posed in 1991:
\begin{problem} \cite{soltes}\label{prob1} Find all graphs $G$ so that the equality $W(G) = W(G-v)$ holds for all $v\in V(G)$. We know just one such graph - the cycle on $11$ vertices. 
\end{problem}
The problem remained untouched for almost 30 years until it was revived by Knor et al. in 2018 \cite{knor}. Since then, several relaxed versions of the problem have been successfully addressed  \cite{akhm,bok,hu}. However, the original version of the problem remains unsolved. \\
\indent In our study, we introduce a new variant of the {\v S}olt{\'e}s  problem as follows.
\begin{problem}\label{prob2} Find all graphs $G$ so that the equality $Kf(G) = Kf(G-v)$ holds for all $v\in V(G)$.
\end{problem}
To distinguish it from the original problem, we will refer to Problem \ref{prob2}  as the Kirchhoff \v{S}olt\'{e}s problem. Unlike some graph invariants that consistently increase or decrease when any vertex is removed from any graph, the Kirchhoff index can exhibit any of three possible outcomes, depending on the choice of the graph and/or the vertex.

\noindent \textbf{Outcome 1.} $Kf(G)<Kf(G-v)$, i.e. the Kirchhoff index increases. In \cite{luk} it was proved that for an $n-$vertex cycle graph $C_n$ it holds $Kf(C_n)=\frac{n^3-n}{12}$. If we take $n=11$, then $Kf(C_{11})=110$ and $Kf(C_{11}-v)=Kf(P_{10})$ and from \cite{klein1} we know $Kf(P_{10})=W(P_{10})=165$.\\
\noindent \textbf{Outcome 2.} $Kf(G)>Kf(G-v)$, i.e. the Kirchhoff index decreases. For a complete $n-$vertex graph $K_n$ it was proved that $Kf(K_n)=n-1$ \cite{luk} and since $K_n-v=K_{n-1}$ we get $Kf(K_{n-1})=n-2$.\\
\noindent \textbf{Outcome 3.} $Kf(G)=Kf(G-v)$, i.e. the Kirchhoff index does not change. Here, $v$ is referred to as a
{\em  good vertex} in $G$.  For a fan graph $F_{1,n-1}$, i.e a graph in which a single vertex is connected by an edge to each vertex of a path $P_{n-1}$, the following formula was deduced in \cite{zhang2}:
\begin{equation*}
Kf(F_{1,n-1})=1+n\sum_{k=1}^{n-2}\frac{1}{1+4\sin^2\frac{k\pi}{2(n-1)}}.
\end{equation*}
If we take $n=4$, then $Kf(F_{1,3})=4$. If we remove a vertex $v$ of a degree $3$ from $F_{1,3}$, we  get $Kf(F_{1,3}-v)=Kf(P_3)=W(P_3)=4$. Note that removing a vertex $w$ of degree $2$ gives $Kf(K_{1,3}-w)=Kf(K_3)=2$.\\ 

Furthermore, we are interested in several relaxed versions of
Kirchhoff \v{S}olt\'{e}s problem, all with the primary goal of finding graphs that contain at least one good vertex.  For that purpose, let
\begin{equation*}
K(G)=\{v\in V(G)\,:\, Kf(G)=Kf(G-v)\}
\end{equation*} and let  $0<\beta\leq 1$, $\beta\in\mathbb{Q}$. We say that a graph $G$ is a $\beta-$Kirchhoff \v{S}olt\'{e}s graph if $|K(G)|\geq\beta|V(G)|$. i.e. the proportion of good vertices in $G$ is at least $\beta$. We pose the following problems.
\begin{problem}\label{prob3} For a fixed rational number $\beta \in (0, 1]$ construct an infinite series of $\beta-$Kirchhoff \v{S}olt\'{e}s graphs.
\end{problem}
For simplicity, Kirchhoff \v{S}olt\'{e}s graph is the synonym
for $1-$Kirchhoff \v{S}olt\'{e}s graph. 
A solution to Problem \ref{prob3} for $\beta=1$ would give an infinite series of solutions to Problem \ref{prob2}.\\
\indent Additionally, we aim to construct an infinite family of graphs where the proportion of good vertices increases and approaches a specific real number $\gamma$.
\begin{problem}\label{prob4} Given a fixed $\gamma \in (0, 1]$, construct an infinite series of graphs where the proportion of good vertices increases and asymptotically approaches  $\gamma$ as the order of the graph grows. 
\end{problem}
\indent In this work, we find particular solutions to Problems \ref{prob2} and \ref{prob4} and show that there are infinitely many solutions to Problem \ref{prob3}. Specifically, we demonstrate the existence of at least one Kirchhoff Šoltés graph and present several constructions that produce an infinite series of graphs either with a fixed proportion of good vertices or with a proportion of good vertices approaching a specified value. In Section 2, we introduce various formulas for calculating the resistance distance of a graph and explain the concept of the Kirchhoff index. Moreover, we demonstrate that $C_5$ is the only known Kirchhoff Šoltés graph and that it is the sole example within the class of unicyclic graphs. Section 3 focuses on complete bipartite graphs, presenting a $3/7-$Kirchhoff \v{S}oltés graph. Additionally, an infinite
series of graphs whose proportion of good vertices tends to $3-2\sqrt{2}$  and an infinite series of $1/2-$Kirchhoff Šoltés graphs is constructed.
Finally, in Section 4, we employ a different approach and for each $s\in\mathbb{N}$ we construct an infinite series of graphs whose proportion of good vertices tends to $\frac{s+1}{2s+1}$. This leads to an infinite class of graphs where the proportion of good vertices approaches $2/3$.

\section{Preliminaries}

Let $G$ be a simple connected undirected graph. By $d(v)$  we denote the degree of a vertex $v\in V(G)$. (Sometimes we write $d_G(v)$ if it is important to emphasize the graph $G$.) \\
\indent The Laplacian matrix of a graph $G$ with vertices $v_1,v_2,\ldots,v_n$ is the matrix $L(G)=[l_{ij}]$ where 

$$l_{ij}=\begin{cases} \displaystyle{d(v_i)}\,\,\,\,\,\textnormal{ if } i=j,\\
-1\,\,\,\,\,\,\,\,\,\textnormal{ if } i \textnormal{ and  } j \textnormal{ are adjacent, }\\ 
0\,\,\,\,\,\,\,\,\,\,\,\,\,\,\textnormal{ if } i \textnormal{ and  } j \textnormal{ are non-adjacent. }
\end{cases}$$
\noindent Equivalently, $L(G)=D(G)-A(G)$, where $D(G)=\textnormal{diag}(d(v_1),\ldots, d(v_n))$  is the degree matrix, i.e. the diagonal matrix formed from the vertex degrees of $G$ and  $A(G)$ is the adjacency matrix of $G$. The Laplacian matrix plays an extremely important role in the computation of resistance distance and the Kirchhoff index of a graph. It possesses numerous intriguing properties and has a wide range of applications. Notably, the Laplacian matrix is positive semidefinite and has exactly one zero eigenvalue when $G$
is a connected graph. In this section, we present several important formulas that will be used later.\\
\indent As mentioned in the introduction, resistance distances are computed by methods of the theory of resistive electrical networks. However, in larger and denser graphs it is rather difficult to use series and parallel connection analysis. Instead, the Moore-Penrose generalized inverse of the Laplacian matrix is used \cite{klein1} to calculate the resistance distance $\Omega(v_i,v_j)$ between two vertices $v_i$ and $v_j$ in $G$. We have
\begin{equation}\label{moo}
\Omega(v_i,v_j)=(e_i-e_j)^{\tau}L^{\dagger}(e_i-e_j),
\end{equation}
where $e_i$ denotes the standard unit vector with a $1$ in the $i-$th position and $0$ elsewhere, and $L^{\dag}$ respresents the Moore-Penrose pseudoinverse of the Laplacian matrix.   Formula (\ref{moo}) can be written as 
$$\Omega(v_i,v_j)=(L^{\dagger})_{ii}+(L^{\dagger})_{jj}-(L^{\dagger})_{ij}-(L^{\dagger})_{ji}$$ and since $(L^{\dagger})^{T}=L^{\dagger}$ we get  
$$\Omega(v_i,v_j)=(L^{\dagger})_{ii}+(L^{\dagger})_{jj}-2(L^{\dagger})_{ij}.$$
Gutman and Xiao \cite{gut} used the matrix $\Gamma:=L^{\dag}(G)+ \frac{1}{n}J$, where $J$ denotes the matrix whose all entries  are equal to unity $1$. They obtained
$$\Omega(v_i,v_j)=\Gamma_{ii}+\Gamma_{jj}-2\Gamma_{ij}.$$
Bapat    et al. \cite{bapat} deduced a rather simple formula for the resistance distance:
\begin{equation}\label{det}\Omega(v_i,v_j)=\frac{\textnormal{det} L(i,j)}{\textnormal{det} L(i)}=\frac{\textnormal{det} L(i,j)}{t(G)},
\end{equation}
where $L(i,j)$ is the submatrix obtained from the Laplacain matrix $L$ by deleting its $i-$th and $j-$th rows and $i-$th and $j-$th columns, and $L(i)$ is the submatrix obtained from the Laplacian matrix $L$ by deleting its $i-$th row and $i-$th column. The second equality in (\ref{det}) holds since $\textnormal{det} L(i)=\textnormal{det} L(j)$ for any $i,j\in \{1,\ldots,n\}$ and the famous Kirchhoff matrix tree theorem \cite{kirch} states that  $\textnormal{det} L(i)=t(G)$, where $t(G)$ is the number of spanning trees of $G$. 

Let $\lambda_1,\ldots,\lambda_n$ be the eigenvalues of the Laplacian matrix $L(G)$, known as the Laplacian eigenvalues of $G$. Assuming $\lambda_1\geq \lambda_2\geq\cdots\geq \lambda_n$, we have $\lambda_n=0$. Moreover, $\lambda_{n-1}>0$ because we consider $G$ to be a connected graph \cite{mer}.  Klein \& Randić \cite{klein1} derived a formula for the Kirchhoff index using the trace of the matrix $L^{\dag}$.

\begin{equation*}Kf(G)=n\,\, \textnormal{tr} (L^{\dag}).
\end{equation*} 

Later, Gutman \& Mohar \cite{moh} showed that $\textnormal{tr} (L^{\dag})=\displaystyle{\sum_{i=1}^{n-1}\frac{1}{\lambda_i}}$, thereby establishing a clear relationship between the Laplacian spectrum and the Kirchhoff index:

\begin{equation}\label{mk}Kf(G)=n\sum_{i=1}^{n-1}\frac{1}{\lambda_i}.
\end{equation}
Though computing the Kirchhoff index of graphs is usually challenging, exact values have been successfully calculated for many graphs, and bounds have been derived.  For arbitrary graphs, Lukovits et al. \cite{luk} proved that $Kf(G)\geq n-1$ with equality if and only if $G$ is the complete graph $K_n$. Palacio \cite{palac} showed that $Kf(G)\leq \binom{n+1}{3}$ with equality if and only if $G$ is a path $P_n$. Since Kirchhoff and Wiener index coincide whenever $G$ is acyclic, the case where $G$ is cyclic is especially interesting. Zhang \& Yang\cite{zhang} proved that for any cyclic graph $G$ 
$$n-1\leq  Kf(G)\leq  \frac{n^3-n}{12}.$$
The first equality holds if and only if $G$ is $K_n$ and the second equality holds if and only if $G$ is $C_n$.\\
\indent We will now present an important graph construction known as a splice.
Let $H_1$ and $H_2$ be two non-trivial graphs with $n_1$ and $n_2$ vertices,
respectively, $x_1\in V(H_1)$ and  $x_2\in V(H_2)$. A \textit{splice}
$H_1xH_2$ of graphs $H_1$ and $H_2$ at a vertex $x$ is a graph formed from
$H_1$ and $H_2$ by identifying vertices $x_1$ and $x_2$, so that the resulting
vertex is $x$. While studying the Kirchhoff index of bicyclic graphs, Zhang
 et al. \cite{bicyc}  proved the following:
\begin{equation}\label{identify}
Kf(H_1xH_2)=Kf(H_1)+Kf(H_2)+(n_1-1) Rt_{H_2}(x)+(n_2-1) Rt_{H_1}(x). 
\end{equation}
It is important to note that the equality (\ref{identify}) also applies to the Wiener index and was proven much earlier by Polansky and Bonchev \cite{pol}.
Using (\ref{identify}), we can easily conclude that a pendant vertex, i.e., a vertex of degree $1$ in any arbitrary graph, is not a good vertex. If $H_1$ is a path $P_2$ with end
vertices $v$ and $x$, then $Kf(P_2)=W(P_2)=1$, $Rt_{P_2}(x)=1$ and we get
$Kf(P_2xH_2)-Kf(P_2xH_2-v)=Kf(P_2xH_2)-Kf(H_2)>0$.\\
Notice that vertex $x$ is also not a good vertex since it is a cut vertex, i.e. a vertex whose removal, along with its incident edges, results in a disconnected graph.\\
\indent We close this section  by formalizing
our observation that removing any vertex from the cycle $C_5$ does not affect the Kirchhoff index.

\begin{proposition}\label{c5}
The cycle  $C_5$ is the a Kirchhoff Šolt\'{e}s graph. 
\end{proposition}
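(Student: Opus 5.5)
By vertex-transitivity of $C_5$, it suffices to compare $Kf(C_5)$ with $Kf(C_5-v)$ for a single vertex $v$; both quantities can be computed from results already quoted in the paper.

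First we compute $Kf(C_5)$ from the formula $Kf(C_n)=\frac{n^3-n}{12}$ of \cite{luk}, recalled in Outcome 1:
\[
Kf(C_5)=\frac{125-5}{12}=10 .
\]
As a sanity check, we also compute it directly from the two resistance distances obtained in the Introduction. By symmetry, $C_5$ has five pairs of adjacent vertices, each with $\Omega=\frac{4}{5}$, and five pairs at distance two, each with $\Omega=\frac{6}{5}$. Hence
\[
Kf(C_5)=5\cdot\frac45+5\cdot\frac65=10 .
\]

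Next, let $v\in V(C_5)$ be arbitrary. Since the automorphism group of $C_5$ acts transitively on vertices, $C_5-v\cong P_4$ regardless of the choice of $v$. The graph $P_4$ is a tree, so by the Klein--Randić result $Kf=W$ for trees. The path $P_4$ has three pairs of vertices at distance $1$, two at distance $2$ and one at distance $3$, so
\[
Kf(P_4)=W(P_4)=3\cdot1+2\cdot2+1\cdot3=10 .
\]

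Therefore $Kf(C_5)=Kf(C_5-v)$ for every $v\in V(C_5)$, so every vertex is good and $C_5$ is a Kirchhoff Šoltés graph. There is no real obstacle here. The only point needing care is to justify that a single vertex deletion suffices, which follows from vertex-transitivity.
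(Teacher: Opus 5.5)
Your proof is correct and follows the paper's argument: both use $Kf(C_n)=\frac{n^3-n}{12}$, the isomorphism $C_n-v\cong P_{n-1}$, and $Kf=W$ for trees. The only difference is that the paper keeps $n$ general and reduces $Kf(C_n)=Kf(C_n-v)$ to $n+1=2n-4$, which also shows that $C_5$ is the only cycle with this property; you instead specialize to $n=5$ directly and add a correct independent check using the resistances $\frac45$ and $\frac65$.
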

\begin{proof}
For $n\geq 3$ the Kirchhoff index of a cycle $C_n$ is given by the formula 
$$Kf(C_n)=\frac{n^3-n}{12}.$$
Removal of an arbitrary vertex $v$ from $C_n$ gives
$$Kf(C_n-v)=Kf(P_{n-1})=W(P_{n-1})=\frac{(n-2)(n-1)n}{6}.$$
Now, the equation $Kf(C_n)=Kf(C_n-v)$ is equivalent to
$n+1=2n-4$ which has a unique solution $n=5$.
\end{proof}

Our conclusions about the pendant vertex, along with Proposition \ref{c5}, imply that $C_5$ is the only unicyclic graph that solves Problem \ref{prob2}.
Our numerical studies failed to identify any additional Kirchhoff Šoltés graphs. We investigated connected graphs with up to $9$ vertices. We employed the geng \cite{geng} software to generate small $n-$vertex $k-$regular graphs. Specifically, we considered the following cases: $k=3$ with $n\leq 4\leq 18$, $k=4$ with $5\leq n\leq 14$, $k=5$ with $6\leq n\leq 12$, $k=6$ with $7\leq n\leq 13$,  $k=7$ with $8\leq n\leq 12$, and $k=8$ with $9\leq n\leq 13$. Additionally, we examined all vertex-transitive graphs with up to 23 vertices using the database compiled by Gordon Royle and Derek Holt, available at \href{https://zenodo.org/records/4010122}{https://zenodo.org/records/4010122} \cite{roy}.   None of these graphs were Kirchhoff Šoltés.

\section{Solutions to relaxed versions of the Kirchhoff Šolt\'{e}s problem}

In this section, we explore relaxed versions of the Kirchhoff-Šoltés problem.  We construct two infinite families of graphs, one of which serves as a particular solution to Problem \ref{prob3}, while the other provides infinitely many solutions to Problem \ref{prob3} and also serves as a specific solution to Problem \ref{prob4}. For simplicity, we focus on graphs whose Laplacian matrices have spectra consisting entirely of integers, known as {\em Laplacian integral graphs}. The simplest examples of such families are the complete graphs and the complete bipartite graphs.

 By $K_n$ we denote the complete graph with $n$ vertices. 
The complete bipartite graph $K_{m,l}$ is a graph whose vertices can be partitioned into independent subsets $V_1$ and $V_2$, $|V_1|=m$, $|V_2|=l$, such that every pair of vertices $\{u,v\}$, $u\in V_1$, $v\in V_2$ is  connected by an edge. The subsets $V_1$ and $V_2$ in a partition are referred to as \textit{blocks}. It is easy to verify that the graphs $K_n$ and $K_{m,l}$ are  Laplacian integral. If we denote the Laplacian eigenvalue $\lambda$ of multiplicity $k$ by $\lambda^{(k)}$, then we can write the spectrum  $\sigma_L$ of the Laplacian matrix of $K_n$ as $\sigma_L(K_n)=\{n^{(n-1)},0\}$, while for $K_{m,l}$ we have
\begin{equation}\label{spec}
\sigma_L(K_{m,l})=\{0,m+l, m^{(l-1)}, l^{(m-1)} \}.
\end{equation}
In the following, we investigate the existence of good vertices in complete bipartite graphs $K_{m,l}$ with $m,l\geq 2.$ We exclude the cases where  $l=1$ or $m=1$  because they result in trees where no vertices are good, as all vertices are either pendant or cut vertices.
\begin{proposition}
Graph $K_{4,3}$ is the only complete bipartite graph with good vertices. Moreover, the vertices of the smaller block are good. Hence, $K_{4,3}$ is a
$3/7$-Kirchhoff \v{S}olt\'es graph.
\end{proposition}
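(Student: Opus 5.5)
The plan is to get a closed formula for $Kf(K_{m,l})$ from the Laplacian spectrum and reduce "a vertex is good" to a Diophantine equation in $m,l$. Let $n=m+l$. By (\ref{mk}) and (\ref{spec}),
$$Kf(K_{m,l})=n\Big(\frac1n+\frac{l-1}{m}+\frac{m-1}{l}\Big)=1+\frac{(m+l)(m^2+l^2-m-l)}{ml}.$$
This formula also holds when $m=1$: it gives $l^2=W(K_{1,l})$.

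All vertices of a block are equivalent under automorphisms, so it suffices to remove a vertex from the block of size $m$ (with $m,l\ge 2$). This gives $K_{m-1,l}$, which is connected. The case of the other block follows by swapping $m$ and $l$.

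\textbf{Reducing to a polynomial equation.} The condition $Kf(K_{m,l})=Kf(K_{m-1,l})$ becomes, after clearing denominators,
$$m(m+l-1)\big((m-1)^2+l^2-(m-1)-l\big)=(m-1)(m+l)(m^2+l^2-m-l).$$
Put $S=m^2-m+l^2-l$. Then $(m-1)^2+l^2-(m-1)-l=S-2(m-1)$, and $m(m+l-1)-(m-1)(m+l)=l$. The equation therefore collapses to
$$lS=2m(m-1)(m+l-1),$$
that is,
$$l^2(l-1)=m(m-1)(l+2m-2).$$
A quick check shows $(m,l)=(3,4)$ is a solution: $16\cdot 3=6\cdot 8$. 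So removing a vertex from the 3-block of $K_{3,4}$ preserves the index, and indeed both indices equal $23/2$.

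\textbf{Showing $(3,4)$ is the only solution with $m,l\ge2$.} This is the main obstacle. The plan is as follows.
\begin{enumerate}
\item Regard the equation as $F(m)=2m^3+(l-4)m^2-(l-2)m-l^3+l^2=0$. For $m\ge 2$ the function $F$ is strictly increasing in $m$, since $F'(m)=6m^2+2(l-4)m-(l-2)>0$ there. Hence for each $l$ there is at most one admissible $m$.
\item Compare sizes. The right-hand side grows like $2m^3$ when $m\gg l$ and like $m^2 l$ when $l\gg m$, while the left-hand side is about $l^3$. So the solution must satisfy $m\asymp l^{2/3}\cdots l$. More precisely, $m^2 l< l^3$ forces $m<l$, and $2m^3+m^2l\ge l^3$ forces $m\gtrsim l/2$ roughly.
\item Within the window $l/2\lesssim m<l$, write $m=l-k$ and try to trap the unique real root between consecutive integers for all large $l$. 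Substituting $m=\alpha l$ shows $\alpha$ tends to the root of $2\alpha^3+\alpha^2=1$, which is irrational. The hope is that evaluating $F$ at $\lfloor \alpha l\rfloor$ and at $\lceil\alpha l\rceil$ yields opposite signs with no integer root, once $l$ exceeds an explicit bound.
\item If this sandwich argument gets messy, fall back on divisibility. The factor $l-1$ is coprime to $l$, and $l^2(l-1)$ must be divisible by $m(m-1)$. Combine this with the size window to leave finitely many pairs, and check those directly, finding only $(3,4)$.
\end{enumerate}

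\textbf{Concluding.} Exactly one ordered solution exists, namely $(m,l)=(3,4)$, with the removed vertex in the smaller block. Applying the swapped condition to $K_{4,3}$, the vertices of the 4-block are not good, because $(4,3)$ does not satisfy $l^2(l-1)=m(m-1)(l+2m-2)$: the left side is $18$ and the right side is $132$. Hence $K_{4,3}$ is the only complete bipartite graph with good vertices, its good vertices are exactly the 3 vertices of the smaller block, and it is a $3/7$-Kirchhoff Šoltés graph.
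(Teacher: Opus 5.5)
Your reduction is correct and matches the paper's with the two blocks swapped. Your equation $l^2(l-1)=m(m-1)(l+2m-2)$ is the paper's cubic with its roles of $m,l$ exchanged, and your monotonicity argument and window $\tfrac{l+1}{2}\le m\le l-1$ are the paper's as well.

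\textbf{The gap: uniqueness of $(3,4)$ is not proved.} You claim $(3,4)$ is the only solution with $m,l\ge 2$, and neither of your routes can deliver this.

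In step 3, saying ``$F$ has opposite nonzero signs at the two integers adjacent to its root'' is a restatement of ``the root $m^*(l)$ is not an integer.'' So the argument is circular. One has $m^*(l)=\alpha l+c+O(1/l)$ with $\alpha$ irrational. Also $c\approx 0.35\neq 0$, so $\lfloor\alpha l\rfloor$ and $\lceil\alpha l\rceil$ are not even the right integers to test. The fractional parts of $m^*(l)$ therefore come arbitrarily close to $0$, and no size estimate can decide whether they ever equal $0$. The ``explicit bound on $l$'' you hope for is exactly the unknown content.

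In step 4, the divisibility $m(m-1)\mid l^2(l-1)$ combined with a window on the \emph{ratio} $m/l$ leaves at most one candidate $m$ per $l$. It gives no bound on $l$, so you do not get finitely many pairs.

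\textbf{What is actually needed} is an arithmetic treatment of the integral points on this cubic. The curve is singular at $(m,l)=(1,0)$, hence of genus $0$ as the paper notes. Lines $l=t(m-1)$ through that point give
$m=\frac{t^2(t+1)}{t^3-t-2}$ and $l=\frac{t(t^2+t+2)}{t^3-t-2}$.
Write $t=p/q$ in lowest terms and set $D=p^3-pq^2-2q^3$. Then $\gcd(D,p)\mid 2$ and $\gcd(D,p+q)\mid 2$, so integrality of $m=p^2(p+q)/D$ forces $D\in\{\pm1,\pm2,\pm4\}$. This is a finite set of cubic Thue equations. Solving them completely requires standard but non-elementary tools, such as Baker-type bounds or a Thue solver in PARI/GP or Magma. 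Large solutions would be approximations $|p/q-\theta|\ll q^{-3}$ to the real root $\theta$ of $t^3-t-2$, and no sandwich argument rules these out. The solution $t=2$ gives $(3,4)$.

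The paper does not do this by hand either: it delegates exactly this step to a Magma computation of integral points on its genus-$0$ curve. You need to carry out such a computation or cite it; as written, uniqueness is unproved.

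\textbf{Minor points.}
For a vertex removed from the $4$-block of $K_{4,3}$, the right-hand side is $4\cdot 3\cdot 9=108$, not $132$. This is harmless, since the case is already excluded by $m<l$.
You should also say why stars $K_{1,l}$ have no good vertices; the paper does this via pendant and cut vertices.
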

\begin{proof}
By using  (\ref{mk}) we obtain the Kirchhoff index of $K_{m,l}$ as follows 
$$Kf(K_{m,l})=1+(m+l)\frac{m^2+l^2-m-l}{ml}.$$
Notice that $Kf(K_{m,l})$
 is a symmetric function with variables $m$ and $l$. 
Without loss of generality, let $v\in V_2$. Then, the equation $Kf(K_{m,l})=Kf(K_{m,l}-v)$ gives
$$1+(m+l)\frac{m^2+l^2-m-l}{ml}=1+(m+l-1)\frac{m^2+(l-1)^2-m-(l-1)}{m(l-1)},$$
which is equivalent to a binary cubic Diophantine equation
\begin{equation}\label{d1}m[m(m-1)-l(l-1)]-2l(l-1)^2=0.
\end{equation}
As discussed above, we can discard the trivial solution $m = l = 1$. Note that the left-hand side of equation (\ref{d1}) is an increasing function of the variable $m$. It is negative for $m\leq l$ and positive for $m\geq 2l$. Therefore, the solution to the equation (\ref{d1}) is an ordered pair $(m,l)$ such that $l+1\leq m\leq 2l-1$.
It is easy to check that  $(m,l)=(4,3)$ solves (\ref{d1}). Moreover, this is the only solution for which $m=l+1$.\\
To demonstrate that there are no other solutions of the form $m = l + k$ for 
$1 < k < l$, one would need to show that the curve
$$C: \qquad 2y^3 - (2x+4)y^2 - (3x^2 - x - 2)y +x^2 - x^3 = 0,$$
 of genus $0$ (rational curve that can be parametrized by rational functions) obtained by substituting $y = l$ and $x = m-l$ does not contain other points
with nonnegative integer coordinates. This has been verified using the {\em Magma} computer algebra system \cite{bosma} in a private communication with N. Adžaga and G. Dražić, and we omit the details.
  \end{proof}

Before we present further results of this section, it is necessary to define a join of two graphs and the theorems on the Laplacian eigenvalues and the Kirchhoff index of the join of two graphs.\\

The join $H_1+H_2$ of graphs $H_1$ and $H_2$ is a graph with the vertex set $V(H_1+H_2)=V(H_1)\cup V(H_2)$ and the edge set $$E(H_1+H_2)=E(H_1)\cup E(H_2)\cup\{\{v_1,v_2\}\,\,|\,\, v_1\in V(H_1),\,v_2\in V(H_2)\}.$$
Let $|V(H_1)|=n$, $|V(H_2)|=m$, $\sigma_L(H_1)=\{\lambda_1,\ldots,\lambda_n\}$ and $\sigma_L(H_2)=\{\mu_1,\ldots,\mu_m\}$. The following statements hold.
\begin{proposition}\label{prop1}
\cite{mer} The Laplacian eigenvalues of $H_1+H_2$ are 
$$0,\,m+n,\,m+\lambda_i,\,n+\mu_j,\quad i=1,\ldots,n-1;\,j=1,\ldots,m-1.$$
\end{proposition}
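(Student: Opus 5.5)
The plan is to exhibit the eigenvectors explicitly and then count them. Order the vertices so that those of $H_1$ come first. With this ordering,
$$L(H_1+H_2)=\begin{pmatrix} L(H_1)+mI_n & -J_{n\times m}\\ -J_{m\times n} & L(H_2)+nI_m\end{pmatrix},$$
since every vertex of $H_1$ gains $m$ new neighbours and every vertex of $H_2$ gains $n$. Because $L(H_1)$ and $L(H_2)$ are real symmetric with the all-ones vectors $\mathbf 1_n$ and $\mathbf 1_m$ in their kernels, we may choose orthonormal eigenbases $x_1,\dots,x_{n-1},\frac{1}{\sqrt n}\mathbf 1_n$ of $L(H_1)$ and $y_1,\dots,y_{m-1},\frac{1}{\sqrt m}\mathbf 1_m$ of $L(H_2)$. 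Here $L(H_1)x_i=\lambda_ix_i$ and $L(H_2)y_j=\mu_jy_j$, and the eigenvalue $0$ corresponding to $\mathbf 1$ has been removed from each list. Each $x_i$ and each $y_j$ is orthogonal to the relevant all-ones vector.

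The key steps are as follows.

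\textbf{Step 1.} For $i\le n-1$, take $(x_i,0)$. Since $J_{m\times n}x_i=(\mathbf 1_n^{\tau}x_i)\mathbf 1_m=0$, the block formula gives $L(H_1+H_2)(x_i,0)=((\lambda_i+m)x_i,0)$. This produces the eigenvalues $m+\lambda_i$.

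\textbf{Step 2.} Symmetrically, $(0,y_j)$ is an eigenvector with eigenvalue $n+\mu_j$ for $j\le m-1$.

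\textbf{Step 3.} The vector $\mathbf 1_{n+m}$ lies in the kernel, which gives the eigenvalue $0$.

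\textbf{Step 4.} Take $z=(m\mathbf 1_n,-n\mathbf 1_m)$. The first block of $Lz$ is $m\cdot m\mathbf 1_n+n m\mathbf 1_n=m(m+n)\mathbf 1_n$, and the second block is $-m n\mathbf 1_m-n\cdot n\mathbf 1_m=-n(m+n)\mathbf 1_m$. Hence $Lz=(m+n)z$, which gives the eigenvalue $m+n$.

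Finally, all $(n-1)+(m-1)+2=n+m$ vectors constructed above are mutually orthogonal and nonzero. The vectors supported on different blocks are orthogonal, and $x_i$ and $y_j$ are orthogonal to the constant vectors, while $z\perp\mathbf 1$ because $mn-nm=0$. They therefore form a basis of $\mathbb R^{n+m}$, so the listed values are exactly the spectrum, multiplicities included.

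The only delicate point is this counting step. We have to make sure the eigenvalue $0$ of each $H_i$ is excluded, and that the two new eigenvalues $0$ and $m+n$ come from the two-dimensional space spanned by $(\mathbf 1_n,0)$ and $(0,\mathbf 1_m)$. On that space the operator acts as the $2\times2$ matrix $\begin{pmatrix} m&-m\\-n&n\end{pmatrix}$, whose eigenvalues are $0$ and $m+n$, which confirms Steps 3 and 4. Completeness then follows from orthogonality and the dimension count, with no need to compute a characteristic polynomial. This argument does not require $H_1$ or $H_2$ to be connected.
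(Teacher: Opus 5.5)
Your proof is correct. The block form of $L(H_1+H_2)$ is right, and $(x_i,0)$, $(0,y_j)$, $\mathbf 1_{n+m}$ and $z=(m\mathbf 1_n,-n\mathbf 1_m)$ are eigenvectors with the claimed eigenvalues. The orthogonality and dimension count then gives the full spectrum with multiplicities. You are also right that connectivity of $H_1,H_2$ is never used; all you need is that $\mathbf 1$ lies in each kernel.

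The paper itself gives no proof of this proposition; it simply quotes it from \cite{mer}. For comparison, the standard derivation goes through complements. For a graph $G$ on $N$ vertices, $L(\overline{G})=NI-J-L(G)$, and $J$ annihilates $\mathbf 1^{\perp}$. So the Laplacian eigenvalues of $\overline{G}$ are $0$ together with $N-\nu_i$, where $\nu_i$ runs over the $N-1$ eigenvalues of $G$ left after one $0$ is removed. One then writes $H_1+H_2=\overline{\overline{H_1}\cup\overline{H_2}}$. The disjoint union has eigenvalues $n-\lambda_i$, $m-\mu_j$, $0$, $0$, and complementing on $N=n+m$ vertices turns these into $m+\lambda_i$, $n+\mu_j$, $m+n$, $0$.

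That route is shorter once the complement lemma is available, and the lemma itself is proved by the same orthogonality argument you use. Your direct construction has the advantage of exhibiting the eigenvectors explicitly, in particular the block-constant vector $z$. That is exactly the style of computation the paper later does by hand for the submatrices $L(G_s)(v_1,v_2)$ and $L(G_s)(v_1,v_{n_s})$ in the proof of Theorem~\ref{ksplice}.
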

\begin{theorem}\label{tzh}\cite{zhang2}
The Kirchhoff index of $H_1+H_2$ is given by
$$Kf(H_1+H_2)=1+(m+n)\left(\sum_{i=1}^{n-1}\frac{1}{m+\lambda_i}+\sum_{j=1}^{m-1}\frac{1}{n+\mu_j}\right).$$
\end{theorem}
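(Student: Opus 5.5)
The statement to prove is Theorem~\ref{tzh}. The plan is to combine formula (\ref{mk}) with the spectrum of the join given in Proposition~\ref{prop1}. The graph $H_1+H_2$ is always connected: any two vertices in different parts are adjacent, and any two vertices in the same part have a common neighbour in the other part. Hence its Laplacian has exactly one zero eigenvalue. By Proposition~\ref{prop1} the remaining $n+m-1$ eigenvalues are
\[
m+n,\qquad m+\lambda_i\ (i=1,\ldots,n-1),\qquad n+\mu_j\ (j=1,\ldots,m-1).
\]
Here $\lambda_n=\mu_m=0$ are the trivial eigenvalues of $H_1$ and $H_2$ that have been discarded. All the listed eigenvalues are positive, since $\lambda_i,\mu_j\geq 0$ and $m,n\geq 1$.

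Next we substitute into (\ref{mk}) with $|V(H_1+H_2)|=m+n$:
\[
Kf(H_1+H_2)=(m+n)\left(\frac{1}{m+n}+\sum_{i=1}^{n-1}\frac{1}{m+\lambda_i}+\sum_{j=1}^{m-1}\frac{1}{n+\mu_j}\right).
\]
The contribution of the eigenvalue $m+n$ is $(m+n)\cdot\frac{1}{m+n}=1$. This yields exactly the stated formula.

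The only delicate step is bookkeeping of multiplicities. We must make sure that the list in Proposition~\ref{prop1} has exactly $n+m$ entries counted with multiplicity: one $0$, one $m+n$, $n-1$ shifted eigenvalues of $H_1$ and $m-1$ shifted eigenvalues of $H_2$. We must also check that the zero eigenvalue of each factor has been removed correctly, because each contributes instead to the values $0$ and $m+n$ of the join. Given Proposition~\ref{prop1}, the count is $1+1+(n-1)+(m-1)=n+m$, so nothing is missing.

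The argument also covers the case where $H_1$ or $H_2$ is disconnected, since Proposition~\ref{prop1} does not require connectivity of the factors. In that case some $\lambda_i$ with $i\le n-1$ may equal $0$, which is harmless because $m+\lambda_i\geq m>0$.
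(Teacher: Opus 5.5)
Your proof is correct and is the intended derivation. The paper gives no proof of its own and only cites the result, but it places Proposition~\ref{prop1} just before the theorem for exactly this purpose: substitute that spectrum into (\ref{mk}), let the eigenvalue $m+n$ contribute $1$, and check, as you did, that the join is connected and that exactly one zero eigenvalue from each factor is discarded.
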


Our next result demonstrates that we can construct an infinite class of graphs for which the proportion of good vertices approaches a specific irrational number.

\begin{theorem}\label{five}
There are infinitely many $\beta-$Kirchhoff Šolt\'{e}s graphs, with $\beta$ tending to $3-2\sqrt{2}$. 
\end{theorem}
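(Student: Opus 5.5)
I would look for the family among complete multipartite graphs, since these are joins of edgeless graphs and hence Laplacian integral. The plan is to take the complete tripartite graph $K_{a,b,c}=\overline{K}_a+K_{b,c}$, with blocks of sizes $a,b,c$, and to make all $a$ vertices of one block good. Requiring the same blocks to be good throughout the family should reduce to a Pell-type equation with infinitely many solutions. The ratio $a/(a+b+c)$ along those solutions should then converge to a quadratic irrational, which one hopes is $3-2\sqrt2=(\sqrt2-1)^2$.

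\textbf{Step 1: the Kirchhoff index.} The spectrum of $\overline{K}_a$ is $\{0^{(a)}\}$, and by (\ref{spec}) that of $K_{b,c}$ is $\{0,b+c,b^{(c-1)},c^{(b-1)}\}$. Proposition \ref{prop1} and Theorem \ref{tzh} then give a closed rational expression $Kf(K_{a,b,c})=F(a,b,c)$. In the same way $Kf(K_{a-1,b,c})=F(a-1,b,c)$. The function $F$ is symmetric in its three arguments, so removing a vertex from any block is covered by the same formula.

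\textbf{Step 2: the Diophantine condition.} Clearing denominators in $F(a,b,c)=F(a-1,b,c)$ gives a polynomial equation in $a,b,c$. The earlier Proposition shows that $c\to 0$ leaves no solutions beyond $K_{4,3}$, so I would impose a linear relation between two of the parameters to get a one-parameter curve. The natural first choice is $b=c$, that is, $K_{a,b,b}$. The hope is that, after a linear substitution, the equation becomes a conic of the form $X^2-2Y^2=k$ for a fixed $k$.

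\textbf{Step 3: infinitely many solutions and the limit.} Starting from one explicit small solution, I would generate infinitely many by multiplying by powers of the fundamental unit $3+2\sqrt2$ of $\mathbb{Z}[\sqrt2]$. Along these solutions $X/Y\to\sqrt2$. The proportion of good vertices is then a fixed rational function of $X/Y$, such as $a/(a+2b)$. Its limit should work out to $3-2\sqrt2$, which lies in $[1/7,1/5)$ as the abstract claims. I must also check three things. First, the vertices of the other blocks are not good, or their contribution does not change the limit. Second, the solutions have positive integer coordinates of the required parity. Third, the graphs are connected and the proportion behaves monotonically along the family.

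\textbf{Main obstacle.} The hard step is Step 2: finding the right subfamily so that the cubic-looking equation collapses to a Pell conic. Nothing guarantees that $K_{a,b,b}$ works. If it does not, I would next try a join $K_s+K_{b,c}$ or a relation $c=b+1$, computing the condition symbolically and looking for a factor or discriminant of the form $2t^2\pm1$. The $\sqrt2$ in the target limit strongly suggests that such a reduction exists for the right choice.
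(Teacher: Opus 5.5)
\textbf{Genuine gap: $K_{a,b,b}$ does not lead to a Pell equation.} Your concrete family $K_{a,b,b}=\overline{K}_a+K_{b,b}$ fails at Step 2, because the condition it produces is a cubic, not a conic. With $n=a+2b$ the Laplacian spectrum is $\{0,\,n^{(2)},\,(2b)^{(a-1)},\,(a+b)^{(2b-2)}\}$. Hence $Kf(K_{a,b,b})=2+n\bigl(\frac{a-1}{2b}+\frac{2(b-1)}{a+b}\bigr)$, and the equation $Kf(K_{a,b,b})=Kf(K_{a-1,b,b})$ simplifies to
\[
(a+b)(a+b-1)^2=2b^2(b-1).
\]
For small $b$ its only solution is $(a,b)=(1,3)$, which is $K_{1,3,3}=K_1+K_{3,3}$. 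In fact, by Siegel's theorem it has only finitely many integer points: it is a rational cubic (singular at $(a+b,b)=(1,0)$) with three points at infinity, given by $(a+b)^3=2b^3$. Even ignoring finiteness, a sequence of solutions would force $(a+b)/b\to\sqrt[3]{2}$. That gives $a/(a+2b)\to(\sqrt[3]{2}-1)/(\sqrt[3]{2}+1)\approx 0.115$, not $3-2\sqrt2$. The cause is the term $n(a-1)/(2b)$ contributed by the independent block. Deleting one of its vertices changes this term by $(a+b-1)/b$, which depends on $a$ and $b$, and that raises the degree of the condition.

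\textbf{Missing idea: make the candidate good vertices a clique.} This means using the join $K_s+K_{m,m}$, which you list only as a fallback; it is exactly the paper's family. Its spectrum is $\{0,\,n^{(s+1)},\,(s+m)^{(2m-2)}\}$ with $n=s+2m$, so $Kf=s+1+\frac{n(2m-2)}{s+m}$. The $s-1$ eigenvalues coming from $K_s$ all equal $n$ and contribute exactly $s-1$, which drops by the constant $1$ when a vertex of $K_s$ is deleted. The condition therefore stays quadratic:
\[
(s+m)(s+m-1)=2m(m-1)\iff \bigl(2(s+m)-1\bigr)^2-2(2m-1)^2=-1 .
\]
Your Step 3 now works. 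The solutions $7+5\sqrt2$, $41+29\sqrt2$, $239+169\sqrt2,\dots$ (successive multiplication by $3+2\sqrt2$) give $(s,m)=(1,3),(6,15),(35,85),\dots$, which is the paper's recurrence $s_{k+1}=2m_k+s_k-1$, $m_{k+1}=2s_{k+1}+m_k$. Along them $s/m\to\sqrt2-1$, so $s/(s+2m)\to(\sqrt2-1)^2=3-2\sqrt2$. The definition only requires at least $\beta|V(G)|$ good vertices, so you need not check whether the vertices of $K_{m,m}$ are good. As written, your proposal neither selects this family nor carries out the reduction, so it does not yet prove the theorem.
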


\begin{proof}

Consider the join  $G=K_s+K_{m,l}$, where $s\geq 1$, $m,l\geq 2$. We study a deletion of an arbitrary vertex of the subgraph $K_s$  of $G$. Since all such vertices belong to the same orbit under the automorphism group of $G$, the goodness of some vertex of the subgraph $K_s$ of $G$ implies the goodness of  all vertices of $K_s$. Therefore, we study the equation $Kf(K_s+K_{m,l})=Kf(K_{s-1}+K_{m,l})$, which is equivalent to equation
\begin{equation}\label{d2}
\frac{l(l-1)}{(s+m)(s+m-1)}+\frac{m(m-1)}{(s+l)(s+l-1)}=1,
\end{equation}
while this equation is equivalent to a quartic Diophantine equation of three variables. Let $k\in\mathbb{N}_0$. We will show that (\ref{d2}) has infinitely many solutions of the form $(s_k,m_k,m_k)$, where the sequences $s_k$ and $m_k$ satisfy the system of linear recurrences with constant coefficients
\begin{equation}\label{system}
\begin{cases}
s_{k+1}=2m_k+s_k-1,  \\[4pt]
m_{k+1}=2s_{k+1}+m_k, 
\end{cases}
\end{equation}
with the initial values $s_0=1$, $m_0=3$.
Note that for $m=n$ equation (\ref{d2}) transforms into 
$2m(m-1)=(s+m)(s+m-1).$
Therefore, we need to prove the equality \begin{equation}\label{d3}
2m_k(m_k-1)=(s_k+m_k)(s_k+m_k-1)
\end{equation} for all $k$.

We use induction on $k$. For $k=0$, i.e. for $(s_0,m_0)=(1,3)$ it is easy to see that  equation (\ref{d3}) holds. 
Let us assume that (\ref{d3}) is true for some integer $k\geq 0$.  We have
    \begin{align*}
        2m_{k+1}(m_{k+1}-1) &=2(2s_{k+1}+m_k)(2s_{k+1}+m_k-1)\\
        &=2(2(2m_k+s_k-1)+m_k)(2(2m_k+s_k-1)+m_k-1)\\
        &=2(5m_k+2s_k-2)(5m_k+2s_k-3)\\
        &=50m_k^2+40m_ks_k-50m_k+8s_k^2-20s_k+12\\
        &=(49m_k^2+42m_ks_k-49m_k+9s_k^2-21s_k+12)\\
        &+(m_k^2-2m_ks_k-m_k-s_k^2+s_k)\\
        &=(7m_k+3s_k-3)(7m_k+3s_k-4)\\
        &+2m_k(m_k-1)-(s_k+m_k)(s_k+m_k-1)
    \end{align*}
    By applying the induction hypothesis, the last two terms of the above-obtained expression vanish and we get 

\begin{align*}
        2m_{k+1}(m_{k+1}-1) &=(7m_k+3s_k-3)(7m_k+3s_k-4)\\
         &=(3(2m_k+s_k-1)+m_k)(3(2m_k+s_k-1)+m_k-1)\\
        &=(3s_{k+1}+m_k)(3s_{k+1}+m_k-1)\\
        &=(s_{k+1}+(2s_{k+1}+m_k))(s_{k+1}+(2s_{k+1}+m_k)-1)\\
        &=(s_{k+1}+m_{k+1})(s_{k+1}+m_{k+1}-1). 
\end{align*}  

To calculate the proportion $\beta_k$ of good vertices in graphs $G_k:=K_{s_k}+K_{m_k,m_k}$, we need to calculate the number $n_k$ of vertices in $G_k$ in terms of $m_k$ and $s_k$. 
By solving  (\ref{system}), we get

\begin{equation}\label{sols}
\begin{cases}
m_k=\displaystyle{\frac{10+7\sqrt{2}}{8}(3+2\sqrt{2})^k+\frac{10-7\sqrt{2}}{8}(3-2\sqrt{2})^k+\frac{1}{2}},  \\[12pt]
s_k=\displaystyle{\frac{3\sqrt{2}+4}{8}(3+2\sqrt{2})^{k}-\frac{3\sqrt{2}-4}{8}(3-2\sqrt{2})^{k}},
\end{cases}
\end{equation}
from which it follows 
\begin{equation}\label{nv}
n_k=2m_k+s_k=\displaystyle{\frac{17\sqrt{2}+24}{8}(3+2\sqrt{2})^{k}-\frac{17\sqrt{2}-24}{8}(3-2\sqrt{2})^{k}+1}.
\end{equation}
\vspace*{5mm}
Now, the proportion of good vertices in $G_k$ is given by $\beta_k=s_k/n_k$. The first few terms of the sequence $(\beta_k)_{k\in\mathbb{N}_0}$ are $$\displaystyle{ \frac{1}{7},\,\frac{1}{6},\,\frac{7}{41},\,\frac{6}{35},\,\frac{41}{239},\,\frac{35}{204}\ldots}$$
and with a simple calculation, we conclude that it is monotonically increasing, i.e. $\beta_{k+1}-\beta_k>0\, \forall k\geq 0$ and 
$$\lim_{k\to \infty}\beta_k=\frac{1}{3+2\sqrt{2}}=3-2\sqrt{2}.$$ 
\end{proof}
Note that $3-2\sqrt{2}<1/5$ which implies that for all $k\geq 0$ it holds $\frac{1}{7}\leq\beta_k<\frac{1}{5}$.\\
\indent Since we aim to get as close as possible to solving the Kirchhoff Šoltés problem by finding families of graphs with the highest possible proportion of good vertices, we present the following theorem.

\begin{theorem}\label{half}
There are infinitely many graphs with half good vertices. 
\end{theorem}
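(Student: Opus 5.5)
The plan is to realise the graphs as joins $K_s+H$ and to use Theorem \ref{tzh}, as in the proof of Theorem \ref{five}. For $H$ I take the simplest possible choice, the edgeless graph $\overline{K_n}$. The join $K_s+\overline{K_n}$ is then a complete split graph. The goal is to show that for $n=s$ every vertex of the clique $K_s$ is good and no vertex of the independent part is good. That gives exactly $s$ good vertices out of $2s$, for every $s\ge 2$, and hence an infinite family with proportion exactly $1/2$.

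\textbf{Step 1: a closed formula.} In Theorem \ref{tzh} take $H_1=K_s$, whose Laplacian eigenvalues are $s^{(s-1)}$ and $0$, and $H_2=\overline{K_n}$, whose Laplacian eigenvalues are all $0$. This gives
\begin{equation*}
Kf(K_s+\overline{K_n}) = 1+(s+n)\left(\frac{s-1}{s+n}+\frac{n-1}{s}\right) = s+\frac{(s+n)(n-1)}{s}.
\end{equation*}
Call this $F(s,n)$. As a sanity check, $s=n=2$ gives $K_4-e$ with $F=4$. Deleting a vertex of degree $3$ from it leaves $P_3$, and $Kf(P_3)=4$, in agreement with the fan example in the introduction.

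\textbf{Step 2: the clique vertices are good.} All vertices of $K_s$ lie in one orbit of the automorphism group, and deleting one of them yields $K_{s-1}+\overline{K_n}$. So it suffices to solve $F(s,n)=F(s-1,n)$. Subtracting and clearing denominators, I expect the cross terms to collapse to
\begin{equation*}
(s+n)(s-1)-(s+n-1)s=-n,
\end{equation*}
so the equation becomes $1-\dfrac{n(n-1)}{s(s-1)}=0$, that is, $n(n-1)=s(s-1)$. For positive integers this forces $n=s$. Thus all $s$ vertices of $K_s$ are good in $K_s+\overline{K_s}$.

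\textbf{Step 3: the independent vertices are not good.} Again by symmetry it is enough to check one vertex, whose deletion gives $K_s+\overline{K_{s-1}}$. Here $F(s,s)=3s-2$, while $F(s,s-1)=s+\frac{(2s-1)(s-2)}{s}$. Their difference is $3-\frac{2}{s}>0$, so none of these vertices is good. Hence $|K(G)|=s=|V(G)|/2$ exactly, and $s\ge 2$ gives infinitely many graphs.

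The only real obstacle is the algebra in Step 2: the simplification of the difference must be exact, with no sign slip in the cross terms. I would confirm it through the $s=2$ case computed directly, as in Step 1. If the clean identity failed, the fallback would be to replace $\overline{K_n}$ by another regular Laplacian-integral graph $H$ and solve the resulting Diophantine condition, in the spirit of Theorem \ref{five}.
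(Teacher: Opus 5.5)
Your proof is correct, and your graphs are in fact the paper's graphs. The paper uses $G_s=K_s+K_{s+1,1}$. Since the centre of the star is adjacent to every other vertex, $G_s\cong K_{s+1}+\overline{K_{s+1}}$, so your $K_s+\overline{K_s}$ is the paper's $G_{s-1}$, and both families start with $K_4-e$.

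The difference is in how the join is split. The paper specializes equation (\ref{d2}) for $K_s+K_{m,l}$ to the slice $m=s+l$ and obtains $l=1$, which makes the $s$ vertices of $K_s$ good. It then needs a separate automorphism argument to see that the star centre lies in the same orbit and is good too. You split instead into clique plus independent set, which puts all good vertices in one orbit from the outset. Theorem \ref{tzh} with $H_2=\overline{K_n}$ then gives the closed form $s+\frac{(s+n)(n-1)}{s}$ and the condition $n(n-1)=s(s-1)$ in one line. Your algebra checks out; for example, $Kf(K_3+\overline{K_3})=Kf(K_2+\overline{K_3})=7$.

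Your Step 3 also shows that the independent-set vertices are not good, so the proportion is exactly $1/2$. The paper's proof leaves this unchecked. That is harmless for the $\beta$-Kirchhoff \v{S}olt\'{e}s definition, which only needs $|K(G)|\ge\beta|V(G)|$, but it is what ``exactly half'' requires.

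What the paper's route buys is context: it exhibits the half-good family as a particular solution of the same Diophantine equation (\ref{d2}) that yields the $3-2\sqrt{2}$ family of Theorem \ref{five}. Your route is shorter and self-contained.
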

\begin{proof}
    If we let $m=s+l$ and substitute it into equation  (\ref{d2}), we obtain
$$\frac{l(l-1)}{(2s+l)(2s+l-1)}=0 \Leftrightarrow l=1,$$
which implies that $(s,s+1,1)$ is a solution to (\ref{d2}) for every $s\geq 1$. Analogously, $(s,1,s+1)$ is also a solution to (\ref{d2}). Let $G_s:=K_s+K_{s+1,1}$. Then $n_s:=|V(G_s)|=2(s+1)$. Let $v$ be the vertex from the block of the set  $V(K_{s+1,1})$ that contains a single element, i.e. $v\in V_2$. Then $v$ belongs to the same orbit under the automorphism group of $G_s$ as the vertices from $K_s$, making $v$ a good vertex in $G_s$. Therefore, the proportion $\beta$ of good vertices in $G_s$ is equal to $\frac{s+1}{2(s+1)}=\frac{1}{2}$ for any $s\geq 1$. 
\end{proof}

For $s\geq 1$, the graph $G_s$ from Theorem \ref{half} is isomorphic to a graph obtained from $K_{2s+2}$ by deletion of edges of an arbitrary induced subgraph $K_{s+1}$.  Examples of graphs $G_s$ for some values $s$ are shown in Figure \ref{half3}. Note that $G_1\cong F_{1,3}\cong K_4-e$. 

\begin{center}
\begin{figure}[htbp]
    \centering
    \begin{minipage}[b]{0.3\textwidth}
        \centering
        \begin{tikzpicture}[scale=1.3]
  \foreach \i/\label in {1/2, 2/1, 3/4, 4/3} {
    \node[vertex] (V\i) at (90*\i:1) {};
    \node at (90*\i:1.3) {\label};
  }
  
  \foreach \i/\j in {1/2, 1/4, 2/3, 2/4, 3/4}
    \draw[edge] (V\i) -- (V\j);
    
  \draw[removed] (V1) -- (V3);
        \end{tikzpicture}
    \end{minipage}
    \hfill
    \begin{minipage}[b]{0.3\textwidth}
        \centering
        \begin{tikzpicture}[scale=1.5]
  \foreach \i/\label in {1/2, 2/1, 3/6, 4/5, 5/4, 6/3} {
    \node[vertex] (V\i) at (360/6*\i:1) {};
    \node at (360/6*\i:1.2) {\label};
  }
  
  \foreach \i/\j in {1/2, 1/4, 1/6, 2/3, 2/4, 2/5, 2/6, 3/4, 3/6, 4/5, 4/6, 5/6}
    \draw[edge] (V\i) -- (V\j);
    
  \draw[removed] (V1) -- (V3);
  \draw[removed] (V3) -- (V5);
  \draw[removed] (V1) -- (V5);
        \end{tikzpicture}
    \end{minipage}
    \hfill
    \begin{minipage}[b]{0.3\textwidth}
        \centering
        \begin{tikzpicture}[scale=0.8]
  \foreach \i/\label in {1/2, 2/1, 3/8, 4/7, 5/6, 6/5, 7/4, 8/3} {
    \node[vertex] (V\i) at (360/8*\i:2) {};
    \node at (360/8*\i:2.4) {\label};
  }
  
  \foreach \i/\j in {1/2, 1/4, 1/6, 1/8, 2/3, 2/4, 2/5, 2/6, 2/7, 2/8, 3/4, 3/6, 3/8, 4/5, 4/6, 4/7, 4/8, 5/6, 5/8, 6/7, 6/8, 7/8}
    \draw[edge] (V\i) -- (V\j);
    
  \draw[removed] (V1) -- (V3);
  \draw[removed] (V3) -- (V5);
  \draw[removed] (V5) -- (V7);
  \draw[removed] (V7) -- (V1);
  \draw[removed] (V1) -- (V5);
  \draw[removed] (V3) -- (V7);
        \end{tikzpicture}
    \end{minipage}
    \caption{Graphs $G_s$ for $s=1,2,3$. The red dashed edges indicate the edges that have been removed from $K_{2s+2}$, i.e. they are not part of $G_s$. Odd-numbered vertices are good vertices.}
    \label{half3}
\end{figure}
\end{center}
\begin{remark} The results in this section are based on solving Diophantine equations. A careful analysis of equation (\ref{d1}) enabled us to establish that
$K_{4,3}$ is the only complete bipartite graph with good vertices. Finding all solutions to the Diophantine equation (\ref{d2}) is a more challenging task. For $m=l=s$, the equation has no solutions. Among particular solutions
$(s,m,l)$, there are infinitely many that satisfy the recurrence (\ref{system}). However, one particular solution, $(s,m,l)=(7,14,8)$, does not fit the recurrence and yields a graph
with $7/29$ good vertices, significantly more than the graphs that satisfy (\ref{system}). Determining all solutions to equation (\ref{d2}) is a challenging task and falls outside the scope of this paper. However, it would be interesting to investigate whether there are other sporadic solutions of (\ref{d2}) yielding higher proportion(s) of good vertices.
\end{remark}

\section{Graph with a proportion of good vertices approaching $2/3$}

In this section, we exploit our infinite family of graphs with half of their vertices being good to construct infinitely many solutions to both Problem \ref{prob3} and Problem \ref{prob4}.  Specifically, for each $s\in \mathbb{N}$, we construct an infinite family of graphs where the proportion of good vertices approaches $\frac{s+1}{2s+1}$. This ultimately leads to a family of graphs where the proportion of good vertices approaches $2/3$, the highest known proportion of good vertices. First, we establish the necessary and sufficient conditions under which a vertex that is good in 
$H_1$ remains good in a splice $H_1xH_2$. 

\begin{proposition}\label{ident}
Let $H_1$ and $H_2$ be two non-trivial graphs with $n_1$ and $n_2$ vertices, respectively, and  $H_1xH_2$ be a splice of $H_1$ and $H_2$ at a vertex $x$. If $v$ is a good vertex in  
$H_1$, $v\neq x$, then $v$ is a good vertex in $H_1xH_2$ if and only if 
\begin{equation}\label{cond}
Rt_{H_2}(x)=(n_2-1)[Rt_{H_1-v}(x)-Rt_{H_1}(x)].
\end{equation}
\end{proposition}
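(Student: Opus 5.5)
The plan is to apply the splice formula (\ref{identify}) twice, once to $H_1xH_2$ and once to $(H_1xH_2)-v$. Since $v\neq x$ and $v\in V(H_1)$, deleting $v$ from the splice gives the splice of $H_1-v$ and $H_2$ at the same vertex $x$:
\[
(H_1xH_2)-v=(H_1-v)xH_2 .
\]
A good vertex cannot be a cut vertex (as noted in the paper, a cut vertex is not good), so $H_1-v$ is connected and has $n_1-1$ vertices. It is also non-trivial, since it contains $x$ and $n_1\ge 3$ follows from $v$ being good. Formula (\ref{identify}) therefore applies to $(H_1-v)xH_2$ as well.

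Writing the two applications out:
\begin{align*}
Kf(H_1xH_2)&=Kf(H_1)+Kf(H_2)+(n_1-1)Rt_{H_2}(x)+(n_2-1)Rt_{H_1}(x),\\
Kf((H_1-v)xH_2)&=Kf(H_1-v)+Kf(H_2)+(n_1-2)Rt_{H_2}(x)+(n_2-1)Rt_{H_1-v}(x).
\end{align*}
Subtracting, the $Kf(H_2)$ terms cancel. Since $v$ is good in $H_1$, we have $Kf(H_1)=Kf(H_1-v)$, so these terms cancel too. What remains is
\[
Kf(H_1xH_2)-Kf((H_1xH_2)-v)=Rt_{H_2}(x)-(n_2-1)\bigl[Rt_{H_1-v}(x)-Rt_{H_1}(x)\bigr].
\]
The vertex $v$ is good in $H_1xH_2$ exactly when this difference is zero, which is precisely condition (\ref{cond}). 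Both directions of the equivalence follow at once.

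The only point needing care is that (\ref{identify}) is legitimately applicable to $H_1-v$. This requires $H_1-v$ to be connected, so that the resistance quantities are defined, and this is guaranteed by $v$ being good and hence not a cut vertex. Beyond that the argument is a direct computation, so I expect no real obstacle.
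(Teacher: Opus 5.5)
Your proof is correct and is the same as the paper's: apply the splice formula to $H_1xH_2$ and to $(H_1-v)xH_2$, subtract, and cancel $Kf(H_1)=Kf(H_1-v)$. Your check that $H_1-v$ is connected, because a good vertex cannot be a cut vertex, is sound; the paper leaves that point implicit.
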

\begin{proof}
From (\ref{identify}) it follows that 
\begin{eqnarray*}
Kf(H_1xH_2)& = & Kf(H_1)+Kf(H_2)+(n_1-1) Rt_{H_2}(x)+(n_2-1) Rt_{H_1}(x)\\
Kf(H_1xH_2-v)& = & Kf(H_1-v)+Kf(H_2)+(n_1-2) Rt_{H_2}(x)\\
&&+(n_2-1) Rt_{H_1-v}(x).
\end{eqnarray*}
 Since $Kf(H_1)=Kf(H_1-v)$, it follows that $Kf(H_1xH_2)-Kf(H_1xH_2-v)=0$ if and only if $Rt_{H_2}(x)=(n_2-1)[Rt_{H_1-v}(x)-Rt_{H_1}(x)]$.
\end{proof}

When constructing a splice, we can take $k\geq 2$ copies $H_i$ of the same graph $H$ and identify the vertices $x_i\in V(H_i)$, $i=1,\ldots,k$, corresponding to the same vertex $x\in V(H)$. We denote the resulting graph by $k\cdot Hx$.
We have the following result. 
 
\begin{proposition}\label{ident_same}
For $m,k\geq 2$, $m,k\in\mathbb{N}$, let $H$ be an arbitrary graph with $m$ vertices that contains a good vertex $v$. Then $v$ is a good vertex in the splice $k\cdot Hx$ of $k$ copies of $H$ at a vertex $x\neq v$ if and only if
\begin{equation}\label{cond2}
mRt_H(x)=(m-1)Rt_{H-v}(x).
\end{equation}
\end{proposition}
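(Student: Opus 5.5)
We will reduce the statement to Proposition \ref{ident} by viewing $k\cdot Hx$ as a single splice. Fix the copy $H_1$ of $H$ that contains $v$. The remaining $k-1$ copies, glued at $x$, form the graph $H_2:=(k-1)\cdot Hx$ (for $k=2$ this is just $H$). Then $k\cdot Hx=H_1xH_2$, with $n_1=m$ and $n_2=(k-1)(m-1)+1$. Since $v$ is good in $H_1\cong H$ and $v\neq x$, Proposition \ref{ident} shows that $v$ is good in $k\cdot Hx$ if and only if
$$Rt_{H_2}(x)=(n_2-1)[Rt_{H-v}(x)-Rt_H(x)]=(k-1)(m-1)[Rt_{H-v}(x)-Rt_H(x)].$$

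The key step is to compute $Rt_{H_2}(x)$. Since $x$ is a cut vertex of $H_2$ joining the $k-1$ copies, the resistance distance between $x$ and any vertex $u$ in a given copy is the same as within that copy. Indeed, the other copies hang off $x$ only, so no current flows through them when a battery is attached across $u$ and $x$. Equivalently, this is the additivity of transmissions at the splice vertex, which also follows from (\ref{identify}) applied to resistance transmissions. Hence
$$Rt_{H_2}(x)=(k-1)Rt_H(x).$$

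Substituting and dividing by $k-1\geq 1$ gives $Rt_H(x)=(m-1)[Rt_{H-v}(x)-Rt_H(x)]$, which rearranges to $mRt_H(x)=(m-1)Rt_{H-v}(x)$, i.e.\ (\ref{cond2}). Notably, the condition does not depend on $k$.

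The only point needing care is justifying $Rt_{H_2}(x)=(k-1)Rt_H(x)$, that is, that resistance distances to $x$ are unaffected by the other blocks attached at $x$. I would state this as the standard fact that $\Omega_{G}(u,x)=\Omega_{B}(u,x)$ whenever $x$ is a cut vertex and $u$ lies in the block component $B$ of $G-x$ together with $x$. This follows from the series/parallel reasoning or from (\ref{det}), since the spanning tree counts factor over the components glued at a cut vertex. I should also check that Proposition \ref{ident} does not require $H_2$ to be non-isomorphic to $H_1$ or anything beyond being non-trivial, which holds because $m\geq 2$.
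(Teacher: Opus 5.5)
Your proposal is correct and matches the paper's proof. You write $k\cdot Hx$ as the splice of $H$ with $(k-1)\cdot Hx$ (so $n_2=(k-1)(m-1)+1$), apply Proposition \ref{ident}, substitute $Rt_{(k-1)\cdot Hx}(x)=(k-1)Rt_H(x)$, and cancel $k-1$. The paper only asserts that last identity, so your cut-vertex justification via the block-diagonal reduced Laplacian in (\ref{det}) is a useful addition. Your aside that it also follows from (\ref{identify}) is less apt, since that formula concerns $Kf$ rather than $Rt$.
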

\begin{proof}
Let $H_1=H$ and $H_2=(k-1)\cdot Hx$. Then $n_1=m$ and $n_2=(k-1)(m-1)+1$. Since $Kf(H)=Kf(H-v)$, from Proposition \ref{ident} it follows
$Rt_{(k-1)\cdot Hx}(x)=(k-1)(m-1)[Rt_{H-v}(x)-Rt_H(x)].$
Since $Rt_{(k-1)\cdot Hx}(x)=(k-1)Rt_H(x)$, we get 
$$(k-1)Rt_H(x)=(k-1)(m-1)[Rt_{H-v}(x)-Rt_H(x)],$$ which implies (\ref{cond2}).
\end{proof}

\begin{theorem}\label{ksplice}
For each pair $s,k\in\mathbb{N}$, the vertices that are good in $G_s$ remain good in the splice $k\cdot G_s x$ at the vertex $x\in V(G_s)$ such that $d_{G_s}(x)=n_s/2$, where $n_s=2(s+1)=|V(G_s)|$. Moreover, the proportion $\beta_{s,k}$ of good vertices in $k\cdot G_s x$ is given by
\begin{equation}\label{bsk}
\beta_{s,k}=\frac{k(s+1)}{k(2s+1)+1}.
\end{equation}
\end{theorem}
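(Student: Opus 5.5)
The plan is to reduce everything to condition (\ref{cond2}) of Proposition~\ref{ident_same}, with $H=G_s$, $m=n_s=2s+2$, $v$ a good vertex of $G_s$ and $x$ a vertex with $d_{G_s}(x)=s+1$. Write $G_s=K_s+K_{s+1,1}$ as a complete split graph: a clique $A$ of size $s+1$ (the $s$ vertices of $K_s$ together with the singleton block of $K_{s+1,1}$) joined completely to an independent set $B$ of size $s+1$. The vertices of $A$ have degree $2s+1$, and they are exactly the good vertices from Theorem~\ref{half}. The vertices of $B$ have degree $s+1=n_s/2$, so $x\in B$.

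The case $s=0$ is not covered, so I take $s\geq 1$. The case $k=1$ is just Theorem~\ref{half}, so I may take $k\ge 2$. Because all vertices of $A$ lie in one orbit of the automorphism group, it suffices to check (\ref{cond2}) for a single $v\in A$. Deleting $v$ gives $G_s-v$, which is again a complete split graph, now with a clique of size $s$ and an independent set of size $s+1$; the vertex $x$ still lies in the independent part.

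The main computational step is a closed formula for the resistance transmission of a vertex $x$ in the independent part of a general complete split graph $S(a,b)=K_a+\overline{K_b}$. I would obtain it from the pairwise resistances:
\begin{itemize}
\item \textbf{Two vertices of $B$.} All of $A$ and the remaining vertices of $B$ are equipotential by symmetry, so the current passes through $a$ parallel edges twice. Hence $\Omega(x,y)=2/a$ for $x,y\in B$.
\item \textbf{A vertex of $B$ and a vertex of $A$.} By symmetry, the other $a-1$ clique vertices are equipotential, and so are the other $b-1$ independent vertices. The network therefore collapses to a small series--parallel network. Alternatively, I would use $\Omega(x,u)=L^\dagger_{xx}+L^\dagger_{uu}-2L^\dagger_{xu}$ with the explicit eigenvectors behind Proposition~\ref{prop1}: eigenvalue $a$ on vectors supported in $B$ summing to zero, eigenvalue $a+b$ on the vector separating $A$ from $B$, and eigenvalue $a+b$ on vectors supported in $A$ summing to zero.
\end{itemize}
Summing gives $Rt_{S(a,b)}(x)$ as a rational function of $a$ and $b$. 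I would then substitute $(a,b)=(s+1,s+1)$ for $Rt_{G_s}(x)$ and $(a,b)=(s,s+1)$ for $Rt_{G_s-v}(x)$, and verify the polynomial identity
\[
(2s+2)\,Rt_{G_s}(x)=(2s+1)\,Rt_{G_s-v}(x).
\]
By Proposition~\ref{ident_same}, this shows that every good vertex of $G_s$ stays good in $k\cdot G_sx$. I expect this step to be the main obstacle: getting the $A$--$B$ resistance right and seeing that the identity holds for every $s$, rather than only at isolated values. As a sanity check I would compute $s=1$ by hand, where $G_1\cong K_4-e$.

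For the proportion, $k\cdot G_sx$ has $k(2s+2)-(k-1)=k(2s+1)+1$ vertices. Each copy contributes its $s+1$ vertices of $A$, none of which equals $x$, and these are good by the above. That gives the numerator $k(s+1)$ in (\ref{bsk}). It remains to confirm that no other vertex is good:
\begin{itemize}
\item The vertex $x$ is a cut vertex, so it is not good, as noted in Section~2.
\item For a vertex $w\in B\setminus\{x\}$, I would compare $Kf(k\cdot G_sx)$ with $Kf(k\cdot G_sx-w)$ using (\ref{identify}). I expect the same resistance formulas to show that the difference is nonzero.
\end{itemize}
If that last check turned out to be delicate, the count in (\ref{bsk}) would still be valid as the number of good vertices inherited from $G_s$, since these are exactly the vertices the construction guarantees.
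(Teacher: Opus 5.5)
Your proposal is correct. Its skeleton matches the paper's: reduce to condition (\ref{cond2}) of Proposition~\ref{ident_same} with $x\in B$ and $v\in A$, compute $Rt_{G_s}(x)$ and $Rt_{G_s-v}(x)$, then count vertices. The difference is in how you get the resistances.

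\textbf{The paper's route.} It uses the determinant formula (\ref{det}). This requires the spectra of $L(G_s)$ and $L(G_s-v)$, of the four $(n_s-2)\times(n_s-2)$ Laplacian minors, and explicit eigenvectors for the non-obvious eigenvalues. Multiplicities such as $\frac{n_s}{2}-3$ become negative at $s=1$, so the paper treats $K_4-e$ separately.

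\textbf{Your route.} You use a symmetry and series--parallel reduction on the general complete split graph $S(a,b)=K_a+\overline{K_b}$. This is shorter and uniform in $s$, and it avoids spanning-tree counts. The reduced network on $x$, $u$, $A\setminus\{u\}$, $B\setminus\{x\}$ is indeed series--parallel, and $a=1$ is a trivial degenerate case. Carrying your plan out gives
\[
\Omega(x,y)=\frac{2}{a},\qquad \Omega(x,u)=\frac{2a+b-1}{a(a+b)},\qquad Rt_{S(a,b)}(x)=\frac{2(b-1)}{a}+\frac{2a+b-1}{a+b}.
\]
Hence $Rt_{G_s}(x)=\frac{7s+2}{2s+2}$ and $Rt_{G_s-v}(x)=\frac{7s+2}{2s+1}$. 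These agree with (\ref{rtg11}) and (\ref{rtg22}), and both sides of your identity equal $7s+2$.

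\textbf{The exact count.} Since $G_s-w$ is also a complete split graph, the same formula settles the point you flagged, which the paper's proof passes over. Take $w\in B\setminus\{x\}$ and split off the copy containing $w$ in (\ref{identify}), with $H_2=(k-1)\cdot G_sx$ and $n_2-1=(k-1)(2s+1)$. This gives
\[
Kf(k\cdot G_sx)-Kf(k\cdot G_sx-w)=\frac{3s+1}{s+1}+(k-1)\frac{7s+3}{s+1}>0.
\]
Also, $x$ is a cut vertex for $k\ge 2$, and the case $k=1$ of this display covers $B$ in $G_s$ itself. So the good vertices are exactly the $k(s+1)$ clique vertices. Thus (\ref{bsk}) is an exact proportion, not just the lower bound your fallback remark would give.
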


\begin{proof} We need to prove that for any $s\in\mathbb{N}$, the graph $G_s$  satisfies condition (\ref{cond2})  from  Proposition \ref{ident_same}. Let $V(G_s)=\{v_1,\ldots,v_{n_s}\}$, $d(v_1)=d(v_2)=\cdots=d(v_{\frac{n_s}{2}})=\frac{n_s}{2}$ and $d(v_{\frac{n_s}{2}+1})=\cdots=d(v_{n_s})=n_s-1$. Note that the vertices $v_1,\ldots,v_{\frac{n_s}{2}}$ belong to the same orbit under the automorphism group of $G_s$ ($G_s-v_j$, $j=\frac{n_s}{2}+1,\ldots,n_s$), and the same holds for the
vertices $v_{\frac{n_s}{2}+1},\ldots, v_{n_s}$ in $G_s$ ($G_s-v_j$, $j=1,\ldots,n_s/2$).
Therefore, we can take $x=v_1$, $v=v_{n_s}$ and prove that 
\begin{equation}\label{cond2Gs}
n_s Rt_{G_s}(v_1)=(n_s-1)Rt_{G_s-v_{n_s}}(v_1).
\end{equation}
Since in $G_s$ it holds $\Omega(v_1,v_i)=\Omega(v_1,v_j)$ for $i,j\in\{2,\ldots,n_s/2\}$ and $\Omega(v_1,v_i)=\Omega(v_1,v_j)$ for $i,j\in\left\{\frac{n_s}{2}+1,\ldots,n_s\right\}$, we can write
\begin{eqnarray}
Rt_{G_s}(v_1) & = &\left(\frac{n_s}{2}-1\right)\Omega_{G_s}(v_1,v_2)+\frac{n_s}{2}\Omega_{G_s}(v_1,v_{n_s}),\,\,\textnormal{and}\label{rtg1}\\ 
Rt_{G_s-v_{n_s}}(v_1) & =& \left(\frac{n_s}{2}-1\right)\Omega_{G_s-v_{n_s}}(v_1,v_2)\notag\\
&&+\left(\frac{n_s}{2}-1\right)\Omega_{G_s-v_{n_s}}(v_1,v_{n_s-1}).\label{rtg2}
\end{eqnarray}
To calculate the resistance transmissions that appear in (\ref{rtg1}) and (\ref{rtg2}), we use formula (\ref{det}) and calculate determinants of Laplacian submatrices as the products of their eigenvalues.\\
Observe that
\[
L(G_s)=\left[
\begin{array}{r r r r r r r r }
\frac{n_s}{2} & 0 & \cdots & 0 & -1 & -1 & \cdots & -1  \\
0 & \frac{n_s}{2} & \cdots & 0 & -1 & -1 & \cdots & -1  \\
\vdots &  \vdots & \ddots & \vdots  & \vdots & \vdots & \ddots & \vdots   \\
0 & 0 & \cdots & \frac{n_s}{2} & -1 & -1 & \cdots & -1  \\
-1 & -1 & \cdots & -1 & (n_s-1) & -1 & \cdots & -1  \\
-1 & -1 & \cdots & -1 & -1 & (n_s-1) & \cdots & -1  \\
\vdots &  \vdots & \ddots & \vdots  & \vdots & \vdots & \ddots & \vdots   \\
-1 & -1 & \cdots & -1 & -1 & -1 & \cdots & (n_s-1)  \\
\end{array}
\right].
\]
Therefore, it is evident that $\lambda=n_s/2$ and $\lambda=n_s$ are both roots of the polynomial $\textnormal{det} (L(G_s)-\lambda I)$, each with  a multiplicity of at least $(n_s/2)-1$.  Since one of the eigenvalues of $L(G_s)$ is $0$ and  from the fact that $\textnormal{tr}(L(G_s))=\sum_{i=1}^{n_s} \lambda_i$, it is easy to determine the remaining eigenvalue $\lambda$ of $L(G_s)$:
 $$\lambda=\textnormal{tr}(L(G_s))-\left(\frac{n_s}{2}-1\right)\left(n_s+\frac{n_s}{2}\right)=n_s.$$
It follows that $$\sigma_{L}(G_s)=\left\{n_s^{\left(\frac{n_s}{2}\right)},\left(\frac{n_s}{2}\right)^{\left(\frac{n_s}{2}-1\right)},0\right\},$$
and the famous Kirchhoff matrix tree theorem \cite{kirch} gives 
$$t(G_s)=\frac{1}{n_s}\prod_{i=1}^{n_s-1} \lambda_i=\left(\frac{n_s^2}{2}\right)^{\frac{n_s}{2}-1}.$$
Let $s=1$. Then, setting $n_1=4$, straightforward calculations show that $\Omega_{G_1}(v_1,v_2)=1$, $\Omega_{G_s}(v_1,v_4)=5/8$, $\Omega_{G_1-v_4}(v_1,v_2)=2$ and $\Omega_{G_1-v_4}(v_1,v_3)=1$. Furthermore,  $Rt_{K_4-e}(1)=9/4$ and $Rt_{K_4-e-2}(1)=3$ which implies that $G_1$ satisfies  (\ref{cond2Gs}).\\
Let $s\geq 2$.  To calculate $\Omega_{G_s}(v_1,v_2)$, we need to determine the eigenvalues of the submatrix $L(G_s)(v_1,v_2)$ of the Laplacian matrix $L(G_s)$, which is obtained by deleting the rows and columns corresponding to vertices $v_1$ and $v_2$. From the structure of the matrix $L(G_s)(v_1,v_2)$, it is clear that  $n_s/2$ is an eigenvalue with multiplicity at least $\frac{n_s}{2}-3$ and $n_s$ is an eigenvalue with multiplicity at least $\frac{n_s}{2}-1$. The remaining two eigenvalues of $L(G_s)(v_1,v_2)$ are  $$\displaystyle{\mu_{1,2}=\frac{n_s}{2}\pm\sqrt{\frac{n_s(n_s-4)}{4}}}$$  with the associated eigenvectors $$\vec{x}_{1,2}=[\underbrace{\mp\sqrt{\frac{n_s}{n_s-4}},\ldots,\mp\sqrt{\frac{n_s}{n_s-4}}}_{(n_s/2)-2 \textnormal{ terms }},\underbrace{1,\ldots,1}_{n_s/2 \textnormal{ terms }}]^t.$$
 This can be verified directly by substituting obtained expressions into the equations $L(G_s)(v_1,v_2)\vec{x}_i=\mu_i \vec{x}_i$, $\vec{x}_i\in\mathbb{R}^{n_s-2}$, $\vec{x}_i\neq \vec{0}$, $i=1,2$. \\
 
Therefore, $\sigma(L(G_s)(v_1,v_2))=\left\{n_s^{\left(\frac{n_s}{2}-1\right)},\left(\frac{n_s}{2}\right)^{\left(\frac{n_s}{2}-3\right)},\frac{n_s}{2}\pm \sqrt{\frac{n_s(n_s-4)}{4}}\right\}$, from which it follows that
$\displaystyle{\textnormal{det}L(G_s)(v_1,v_2)=\prod_{\mu\in\sigma(L(G_s)(v_1,v_2))}}\mu=n_s^{\frac{n_s}{2}}\left(\frac{n_s}{2}\right)^{\frac{n_s}{2}-3}$. We obtain
\begin{equation}\label{12}
\Omega_{G_s}(v_1,v_2)=\frac{\textnormal{det}L(G_s)(v_1,v_2)}{t(G_s)}=\frac{4}{n_s}.
\end{equation}
By using similar arguments, we can calculate $\Omega_{G_s}(v_1,v_{n_s})$. The structure of $L(G_s)(v_1,v_{n_s})$ implies that $\frac{n_s}{2}$ and $n_s$  are among its eigenvalues, each with multiplicity at least $\frac{n_s}{2}-2$. 
It is easy to verify that the remaining two eigenvalues of $L(G_s)(v_1,v_{n_s})$ are  $$\displaystyle{\mu_{1,2}=\frac{1}{2}\left(n_s+1\pm\sqrt{n_s^2-4n_s+5}\right)},$$  with the associated eigenvectors $$\vec{x}_{1,2}=[\underbrace{-\frac{n_s-2}{1\pm\sqrt{n_s^2-4n_s+5}},\ldots,-\frac{n_s-2}{1\pm\sqrt{n_s^2-4n_s+5}}}_{(n_s/2)-1 \textnormal{ terms }},\underbrace{1,\ldots,1}_{(n_s/2)-1 \textnormal{ terms }}]^t.$$
We get $\sigma(L(G_s)(v_1,v_{n_s}))=\left\{n_s^{\left(\frac{n_s}{2}-2\right)},\left(\frac{n_s}{2}\right)^{\left(\frac{n_s}{2}-2\right)},\frac{1}{2}(n_s+1\pm\sqrt{n_s^2-4n_s+5})\right\}$, from which it follows that
$$\displaystyle{\textnormal{det}L(G_s)(v_1,v_{n_s})=\prod_{\mu\in\sigma(L(G_s)(v_1,v_{n_s}))}}\mu=\frac{3n_s-2}{2}\left(\frac{n_s^2}{2}\right)^{\frac{n_s}{2}-2}.$$
Therefore,
\begin{equation}\label{1n}
\Omega_{G_s}(v_1,v_{n_s})=\frac{\textnormal{det}L(G_s)(v_1,v_{n_s})}{t(G_s)}=\frac{3n_s-2}{n_s^2}.
\end{equation}    
By inserting (\ref{12}) and (\ref{1n}) into (\ref{rtg1}), we obtain
\begin{equation}\label{rtg11}
Rt_{G_s}(v_1)=\frac{7}{2}-\frac{5}{n_s}.
\end{equation}
To calculate $\Omega_{G_s-v_{n_s}}(v_1,v_2)$ and $\Omega_{G_s-v_{n_s}}(v_1,v_{n_s-1})$, we  first need to determine the number $t(G_s-v_{n_s})$ of spanning trees in $G_s-v_{n_s}$. Without delving into the details, from the spectrum of $L(G_s-v_{n_s})$, which is easy to establish, we get 
$$t(G_s-v_{n_s})=(n_s-1)^{\frac{n_s}{2}-2}\left(\frac{n_s}{2}-1\right)^{\frac{n_s}{2}-1}.$$
Furthermore, the spectrum of the matrix $L(G_s-v_{n_s})(v_1,v_2)$ is
$$\sigma (L(G_s-v_{n_s})(v_1,v_2))=\left\{(n_s-1)^{\left(\frac{n_s}{2}-2\right)},\left(\frac{n_s}{2}-1\right)^{\left(\frac{n_s}{2}-3\right)},n_s-2,1\right\}.$$
The eigenvalues $n_s-1$ and $\frac{n_s}{2}-1$ with their multiplicities are easy to determine from the structure of the matrix $L(G_s-v_{n_s})(v_1,v_2)$.  The eigenvalues $n_s-2$ and $1$ correspond to eigenvectors $$[\underbrace{\frac{n_s}{2}-1,\ldots,\frac{n_s}{2}-1}_{(n_s/2)-2 \textnormal{ terms }},\underbrace{\frac{n_s}{2}-2,\ldots,\frac{n_s}{2}-2}_{(n_s/2)-1 \textnormal{ terms }}]^t$$ and 
$$[\underbrace{-1,\ldots,-1}_{(n_s/2)-2 \textnormal{ terms }},\underbrace{1,\ldots,1}_{(n_s/2)-1 \textnormal{ terms }}]^t,$$ 
respectively. 
It follows that 
\begin{equation}\label{12-v}
\Omega_{G_s-v_{n_s}}(v_1,v_2)=\frac{\textnormal{det}L(G_s-v_{n_s})(v_1,v_2)}{t(G_s-v_{n_s})}=\frac{4}{n_s-2}.
\end{equation}
Finally, the spectrum of the matrix $L(G_s-v_{n_s})(v_1,v_{n_s-1})$ is
$$\left\{(n_s-1)^{\left(\frac{n_s}{2}-3\right)},\left(\frac{n_s}{2}-1\right)^{\left(\frac{n_s}{2}-2\right)},\frac{1}{2}(n_s\pm \sqrt{n_s^2-6n_s+12})\right\}.$$
Note that the eigenvalues $\frac{1}{2}(n_s\pm \sqrt{n_s^2-6n_s+12})$  correspond to eigenvectors $$[\underbrace{-\frac{n_s-4}{2\pm\sqrt{n_s^2-6n_s+12}},\ldots,-\frac{n_s-4}{2\pm\sqrt{n_s^2-6n_s+12}}}_{(n_s/2)-1 \textnormal{ terms }},\underbrace{1,\ldots,1}_{(n_s/2)-2 \textnormal{ terms }}]^t.$$
We obtain
\begin{equation}\label{1n-v}
\Omega_{G_s-v_{n_s}}(v_1,v_{n_s-1})=\frac{\textnormal{det}L(G_s-v_{n_s})(v_1,v_{n_s-1})}{t(G_s-v_{n_s})}=\frac{3}{n_s-1}.
\end{equation}
By inserting (\ref{12-v}) and (\ref{1n-v}) into (\ref{rtg2}), we obtain
\begin{equation}\label{rtg22}
Rt_{G_s-v_{n_s}}(v_1)=\frac{7n_s-10}{2(n_s-1)}.
\end{equation}
We conclude that (\ref{rtg11}) and (\ref{rtg22}) satisfy (\ref{cond2Gs}).
Since the number of vertices in $k\cdot G_s x$ is equal to $k(2s+1)+1$ and the number of good vertices is $k(s+1)$, we obtain $\beta_{s,k}$ as in (\ref{bsk}).
\end{proof}

It is easy to verify that the sequence of proportions $(\beta_{s,k})_{s,k\in\mathbb{N}}$  of good vertices from Theorem \ref{ksplice} decreases in variable $s$ and increases in variable $k$. For a fixed $s$, it tends to $\frac{s+1}{2s+1}$. Therefore, by taking $s=1$, we get the highest possible limiting value of the proportion of good vertices: 
$$\lim_{k\to\infty} \beta_{1,k}=\lim_{k\to\infty}\frac{2k}{3k+1}=\frac{2}{3}.$$
A graph with $\beta_{1,4}=8/13$ good vertices is shown in Figure \ref{fig4}.

\begin{corollary}\label{2_3}
For each $s\in\mathbb{N}$ there exists an infinite family of graphs where the proportion of good vertices approaches $\frac{s+1}{2s+1}.$
For $s=1$ this proportion approaches $2/3$.\hfill\qed
\end{corollary}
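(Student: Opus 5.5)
The corollary follows directly from Theorem \ref{ksplice}, so the plan is to fix $s\in\mathbb{N}$ and take as the family the splices $\{k\cdot G_s x : k\geq 2\}$. Here $x$ is a vertex of degree $n_s/2$ in $G_s$ from Theorem \ref{half}. These graphs have $k(2s+1)+1$ vertices, so their orders are pairwise distinct and the family is infinite.

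By Theorem \ref{ksplice}, every vertex that is good in $G_s$ stays good in $k\cdot G_s x$. These are the $s+1$ vertices of degree $n_s-1$ in each copy, and $x$ is not among them. Hence $|K(k\cdot G_s x)|\geq k(s+1)$, and the proportion of good vertices is at least $\beta_{s,k}$ from (\ref{bsk}).

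It remains to compute the limit and check monotonicity. From
\[
\beta_{s,k}=\frac{k(s+1)}{k(2s+1)+1}=\frac{s+1}{2s+1+1/k}
\]
we see that $\beta_{s,k}$ is strictly increasing in $k$ and tends to $\frac{s+1}{2s+1}$ as $k\to\infty$. This gives the family required by Problem \ref{prob4} with $\gamma=\frac{s+1}{2s+1}$. Setting $s=1$ yields $\beta_{1,k}=\frac{2k}{3k+1}\to\frac{2}{3}$.

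There is one subtle point. The theorem only guarantees that the good vertices of $G_s$ remain good, so the true proportion of good vertices could in principle be larger. The corollary asks for the proportion to approach the stated value, so the exact count is needed, not just a lower bound. The pendant-vertex argument does not apply here. Instead:

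\begin{itemize}
\item $x$ is a cut vertex for $k\geq 2$, so it is not good.
\item The remaining vertices are the $s$ vertices of degree $n_s/2$ in each copy other than $x$. Their goodness in the splice would have to be checked separately, for example via Proposition \ref{ident}.
\end{itemize}

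If that check is left open, I would phrase the statement in terms of the lower bound $\beta_{s,k}$ and cite the count given in Theorem \ref{ksplice} as exact. This is how the paper treats it, since the theorem states the proportion ``is given by'' (\ref{bsk}). Otherwise there is no obstacle: the corollary is an immediate limit computation.
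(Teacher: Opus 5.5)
Your proposal is correct and is exactly the paper's argument: the corollary is read off from Theorem \ref{ksplice} by observing that $\beta_{s,k}=\frac{s+1}{2s+1+1/k}$ increases in $k$ and tends to $\frac{s+1}{2s+1}$. Your caveat about exactness is fair, since the paper asserts the count $k(s+1)$ without checking the remaining vertices, and it can be closed directly from (\ref{identify}) rather than Proposition \ref{ident}, whose hypothesis fails because such vertices are not good in $G_s$: for $u\neq x$ of degree $n_s/2$ one gets $Kf(k\cdot G_s x)-Kf(k\cdot G_s x-u)=\frac{3s+1}{s+1}+(k-1)\frac{7s+3}{s+1}>0$, so the proportion is exactly $\beta_{s,k}$.
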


\begin{center}
\begin{figure}[htbp]
\centering
\begin{tikzpicture}[scale=0.8]
    \node[draw, circle, fill=black, minimum size=1.6mm, inner sep=0pt] (1) at (0,0) {};

    \foreach \i [count=\j from 2] in {90, 135, 180, 225, 270, 315, 0, 45} {
        \node[draw, circle, fill=red, minimum size=1.6mm, inner sep=0pt] (\j) at (\i:2) {};
    }

    \foreach \j in {2,3,4,5,6,7,8,9} {
        \draw (1) -- (\j);
    }

    \draw (2) -- (3);
    \draw (4) -- (5);
    \draw (6) -- (7);
    \draw (8) -- (9);

    \node[draw, circle, fill=black, minimum size=1.6mm, inner sep=0pt] (10) at ($ (2)!.5!(3) + (115:2) $) {};
    \node[draw, circle, fill=black, minimum size=1.6mm, inner sep=0pt] (11) at ($ (6)!.5!(7) + (295:2) $) {};
    \node[draw, circle, fill=black, minimum size=1.6mm, inner sep=0pt] (12) at ($ (8)!.5!(9) + (25:2) $) {};
    \node[draw, circle, fill=black, minimum size=1.6mm, inner sep=0pt] (13) at ($ (4)!.5!(5) + (205:2) $) {};

    \draw (10) -- (2);
    \draw (10) -- (3);
    \draw (11) -- (6);
    \draw (11) -- (7);
    \draw (12) -- (8);
    \draw (12) -- (9);
    \draw (13) -- (4);
    \draw (13) -- (5);

\end{tikzpicture}
\caption{A splice $4\cdot G_1 x$ identified at a vertex $x$ of  degree $2$. The vertices of degree $3$ are good and are colored in red.}
\label{fig4}
\end{figure}
\end{center}

\section{Concluding remarks}

In this paper, we addressed the problem of identifying graphs whose Kirchhoff index, defined as the sum of resistance distances between all pairs of vertices, remains unchanged after removing an arbitrary vertex.  This problem is a resistance distance variant of the (in)famous \v{S}olt\'{e}s
problem for the Wiener index, i.e., for the shortest-path distance.  Similar to the original problem, where the only known solution is the cycle  $C_{11}$ on
eleven vertices, the only solution identified for the resistance distance problem is also a cycle, specifically  $C_5$. The resemblance to the original problem is further highlighted by the fact that the existence of other solutions has not been ruled out and the highest known proportion of good vertices is not more than $\frac{2}{3}$. However, we demonstrated that any potential additional solutions cannot belong to certain graph classes, such as graphs with cut vertices, complete graphs, and complete bipartite graphs.

We also explored several relaxed versions of the Kirchhoff Šolt\'{e}s problem, where the common goal is to find graphs with at least one good vertex. We studied the $\beta-$Kirchhoff Solt\'{e}s problem, aiming to find an infinite family of graphs where the proportion
of good vertices is at least $\beta$. Furthermore, we investigated the problem of constructing infinite families
of graphs for which the proportion of good vertices grows and asymptotically approaches a certain real
number as the order of a graph increases. We found infinitely many solutions for both relaxed versions.
Despite the similarities (sums of pairwise distances, only one known solution being a cycle, and partial solutions to relaxed versions, some of which use (\ref{identify}), which applies to both the Wiener and Kirchhoff indices), the two versions of the \v{S}olt\'{e}s problem require different approaches, primarily due to the Kirchhoff index's definition via the Laplacian spectrum. In our studies, we relied on Laplacian integral graphs and structural results from the theory of electric networks to provide partial solutions to the relaxed versions of the problem.

Several intriguing questions remain unanswered in this paper. It would be interesting to investigate whether a more comprehensive analysis of equation (\ref{d2}) might yield sporadic solutions with larger fractions of good vertices. Another promising direction could involve examining other classes of Laplacian integral graphs \cite{kirkland} and assessing their potential for constructing infinite families of graphs with high fraction(s) of good vertices. We believe that both approaches used here, involving Diophantine equations and the splicing method, could be effectively applied to such graph classes.

We conclude this section by proposing the following problem.
\begin{problem} \label{last}
Which (rational or irrational) numbers $\gamma \in (0,1]$ appear either as exact or as 
limiting values of proportions of good vertices with respect to the
Kirchhoff \v{S}olt\'es problem for simple undirected graphs? 
\end{problem}

\vskip 1pc
\section*{Acknowledgements}
This work was supported by the Croatian Science Foundation under project number HRZZ-IP-2024-05-2130. Partial support of the Slovenian ARRS (program P1-0383, grant no. J1-3002) is
gratefully acknowledged by T. Do\v{s}li\'c. 
The authors are thankful to Nikola Ad\v{z}aga and Goran Dra\v{z}i\'c for their
help with analysis of equation (\ref{d1}).


\bibliographystyle{model1-num-names}

\end{document}